\newtheorem{theorem}{Theorem}
\newtheorem{proposition}[theorem]{Proposition}
\newtheorem{lemma}[theorem]{Lemma}
\newtheorem{corollary}[theorem]{Corollary}
\theoremstyle{definition}
\theoremstyle{remark}
\newtheorem{remark}[theorem]{Remark}
\newtheorem{example}[theorem]{Example}
\def\R{\mathbb{R}}
\def\pscal#1#2{\left\langle#1,\,#2\right\rangle}
\def\dist{d}
\font\maius=cmcsc10 scaled1200
\DeclareMathOperator{\Cut}{\overline \Sigma}  
\DeclareMathOperator{\high}{M}
\DeclareMathOperator{\conv}{conv}
\begin{document}

\title[A symmetry problem for the infinity Laplacian]%
{A symmetry problem for the infinity Laplacian}%
\author[G.~Crasta, I.~Fragal\`a]{Graziano Crasta,  Ilaria Fragal\`a}
\address[Graziano Crasta]{Dipartimento di Matematica ``G.\ Castelnuovo'', Univ.\ di Roma I\\
P.le A.\ Moro 2 -- 00185 Roma (Italy)}
\email{crasta@mat.uniroma1.it}

\address[Ilaria Fragal\`a]{
Dipartimento di Matematica, Politecnico\\
Piazza Leonardo da Vinci, 32 --20133 Milano (Italy)
}
\email{ilaria.fragala@polimi.it}

\keywords{}
\subjclass[2010]{Primary 49K20, Secondary 49K30, 35J70,  35N25.  }
\date{\today}

\begin{abstract}  
Aim of this paper is to prove necessary and sufficient conditions on the geometry of a domain $\Omega \subset \R ^n$ in order that 
the homogeneous Dirichlet problem for the infinity-Laplace equation in $\Omega$ with constant source term admits  a viscosity solution depending only on the distance from $\partial \Omega$. 
This problem was previously addressed and studied by  Buttazzo and Kawohl in \cite{butkaw}. 
In the light of some geometrical achievements reached in our recent paper \cite{CFb},
we revisit the results obtained in \cite{butkaw}  and we prove strengthened versions of them, 
where any regularity assumption on the domain and on the solution is removed.  
Our results require a delicate analysis based on viscosity methods. 
In particular, we need to build  suitable viscosity test functions, whose construction involves 
a new estimate of the distance function $d_{\partial \Omega}$ near singular points.
\end{abstract} 


\maketitle

\section{Introduction}\label{secintro}

In \cite{butkaw}, Buttazzo and Kawohl considered the following overdetermined boundary value problem
for the infinity Laplacian:
\begin{equation}\label{overdet}
\left\{\begin{array}{ll}								
-\Delta_\infty u=1\quad &\mbox{in }\Omega\\
u=0\quad &\mbox{on }\partial\Omega\\
{\frac{\partial u}{\partial \nu}}=c\quad & \mbox{on }\partial\Omega\, .
\end{array}\right. 
\end{equation}
Here  $\Omega$ is an open bounded domain of $\R^n$ with a smooth boundary, $\nu$ denotes the unit inner normal to $\partial \Omega$, and $c$ is a positive constant. 
We recall that the
infinity Laplacian operator $\Delta_\infty$ is defined on smooth functions by
$$\Delta_\infty u=\langle D^2 u \nabla u,\nabla u\rangle=
\sum_{i,j=1}^n\frac{\partial^2u}{\partial x_ix_j}\frac{\partial
u}{\partial x_i}\frac{\partial u}{\partial x_j} \qquad\mbox{for all
}u\in C^2(\Omega).$$
In the last decade, pde's involving this operator, first discovered by Aronsson in the pioneering work \cite{Aro},  have attracted an increasing amount of interest; without any attempt of completeness, we refer to the monograph \cite{Barron} and to the representative works \cite{ArCrJu, BhMo, Cran, CEG, EvYu, Jen, JenWanYu, LuWang, Yu}.  

In view of the identity
$$\Delta_pu=\nabla\cdot(|Du|^{p-2}Du)=(p-2)|Du|^{p-4}\left(\Delta_\infty u+\frac{|Du|^2\, \Delta u}{p-2}\right)\, $$
and of standard convergence results for viscosity solutions (see for instance \cite{CHL}), if a sequence of $p$-harmonic functions converges locally uniformly as $p \to + \infty$, the limit function $u$ is an 
infinity-harmonic function, i.e.\ a solution to $\Delta _ \infty u = 0$. 
This is the reason why, with a mathematical abuse, (\ref{overdet}) can be regarded as the limit as $p \to + \infty$ of
the overdetermined boundary value problems
\begin{equation}\label{pLap}
\left\{\begin{array}{ll}
- \Delta _p u=1\quad &\mbox{in }\Omega\\
u=0\quad &\mbox{on }\partial\Omega\\
{\frac{\partial u}{\partial \nu}}=c\quad & \mbox{on }\partial\Omega\, .
\end{array}\right.
\end{equation}
The latter have been extensively studied in the literature. 
The first result was proved by Serrin in the seminal paper \cite{Se} and states that, in the linear case of the Laplacian (namely when $p=2$), problem (\ref{pLap}) admits a solution if and
only if $\Omega$ is a ball.
Since then, several  generalizations and related results have been proved, see for instance  
\cite{BN,DP,GL,K2,fg,fgk}. 

The methods adopted in the literature to treat problem (\ref{pLap}) are no longer exploitable when dealing with problem (\ref{overdet}), 
because the infinity Laplacian operator $\Delta _\infty$ is highly degenerate.  
In particular,  solutions can be no longer intended either in classical or in weak sense, respectively because they are not expected to be of class $C^2$ (cf.\  \cite{EvSav,EvSm}), and because $\Delta _\infty$ is not in divergence form. 
Thus, the notion of solution has to be understood in the sense of viscosity (the definition is recalled for convenience at the end of the Introduction).  Moreover, as long as one wants to understand  both the boundary conditions in (\ref{overdet}) pointwise, one has to restrict attention to solutions which are $C ^1$ up to the boundary. 

\par

These difficulties led to consider a simplified version of problem (\ref{overdet}), which consists in investigating the existence of  viscosity solutions to the Dirichlet problem 
\begin{equation}
\label{f:dirich}
\begin{cases}
-\Delta_{\infty} u = 1 &\text{in}\ \Omega\\
u = 0 &\text{on}\ \partial\Omega 
\end{cases}
\end{equation}
within the  class of functions depending only
on the distance to the boundary of $\Omega$, namely functions having the same level sets as the distance function
\begin{equation}\label{defd}
d_{\partial \Omega}(x) := \min_{y\in \partial \Omega} |x-y|,\qquad x\in\overline{\Omega}\ .
\end{equation}

The existence and uniqueness of a viscosity solution to problem (\ref{f:dirich}) (actually of a more general version of it, allowing a constant-sign source term) has been established by Lu and Wang in \cite{LuWang}. 
The problem is then to establish for which geometries of $\Omega$ such solution turns out to depend only on $d_{\partial \Omega}$, and in particular whether this occurs only if $\Omega$ is a ball. 
Following \cite{gazzola}, by {\it web functions} in the sequel  we mean continuous functions depending only on $d_{\partial \Omega}$   (the name comes from the fact that, in case of planar polygons,
level lines of the distance functions recall the pattern of a spider web). To the best of our knowledge, these functions 
firstly appeared in  the monograph by P\'olya and Szeg\"o
\cite[Section 1.29]{posz}; more recently, they have found application in different variational problems, see \cite{Cf,Cg,CFGa,CFGc,CFGb,CGa}. 

Clearly, asking that the solution of problem (\ref{f:dirich}) is a web function
is a more severe restriction than imposing just the constancy of its normal derivative along the boundary as in (\ref{overdet}). 
However, this restriction is somehow natural, for instance because it is known that $d_{\partial \Omega}$ is the uniform limit as $p \to + \infty$ of
 the solution $u_p$ to the first two eqs.\ in problem (\ref{pLap}) (see \cite{BDM,K1}), as well as the unique infinity ground state on $\Omega$ up to constant factors (see \cite{Yu}). This latter result holds under the restriction that
 the cut locus and high ridge of $\Omega$ coincide. Such geometric property is precisely
   the same found by Buttazzo and Kawohl  as a necessary and sufficient condition for the existence of a web solution to problem (\ref{f:dirich}). 
   
Let us recall that the {\it cut locus} and the {\it high ridge} of $\Omega$ are defined respectively as 
\begin{eqnarray} 
\hbox{$\Cut(\Omega)$ := the closure of the singular set  $\Sigma (\Omega)$
of $d_{\partial \Omega}$} & \label{cut} \\ \noalign{\smallskip}
\hbox{$\high (\Omega)$ := the set where $d _{\partial \Omega}(x) = \rho _\Omega:= \max _{ \overline \Omega} d _{\partial \Omega}\,. $ } & \label{high}
\end{eqnarray}

Moreover, let us introduce the function $\phi _\Omega$ which is the natural candidate  to be a web solution to (\ref{f:dirich}), as it can be easily seen via a one-dimensional ansatz (cf. \cite[Section 2]{butkaw}):
\begin{equation}\label{defphi} 
\phi _\Omega (x) : = c_0 \left[\rho_\Omega ^{4/3} - (\rho_\Omega - \dist_{\partial \Omega}(x))^{4/3}\right] \,, \qquad \hbox{ where } c_0 := 3^{4/3} / 4\,.
\,  
\end{equation}

With this notation, the result by Buttazzo and Kawohl reads:

\begin{theorem}\label{teoweb} \cite[Theorem 1]{butkaw}

Let $\Omega\subset \R^n$ be an open bounded connected domain,
with $\partial \Omega$ of class $C^2$.
\begin{itemize}
\item[(a)] Assume that $\Cut (\Omega) =\high(\Omega) $.
 Then $\phi
_\Omega$ is the unique web viscosity solution of class $C ^ 1
(\overline \Omega)$ to problem \eqref{f:dirich}.

\smallskip
\item[(b)] Conversely, assume that problem $(\ref{f:dirich})$ admits
a web viscosity solution of class $C ^ 1 (\overline \Omega)$. Then
$\Cut(\Omega) = \high(\Omega)$.
\end{itemize}
\end{theorem}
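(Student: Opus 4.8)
The plan is to prove the two implications separately, the technical core in each case being a pointwise analysis of $\phi_\Omega$, respectively of an arbitrary web solution, at the singular points of $d_{\partial\Omega}$. For part (a), write $\phi_\Omega=g(d_{\partial\Omega})$ with $g(t)=c_0[\rho_\Omega^{4/3}-(\rho_\Omega-t)^{4/3}]$. Since $d_{\partial\Omega}$ solves the eikonal equation, at points where it is twice differentiable one has $|\nabla d_{\partial\Omega}|=1$ and $\Delta_\infty d_{\partial\Omega}=0$, hence $\Delta_\infty(g\circ d_{\partial\Omega})=g'(d_{\partial\Omega})^2\,g''(d_{\partial\Omega})$, and the normalisation $c_0=3^{4/3}/4$ is precisely the one making $-g'(t)^2g''(t)\equiv 1$. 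One deduces (the viscosity formulation being obtained in the same spirit, since there $d_{\partial\Omega}$ is locally the distance to a $C^2$ hypersurface) that $\phi_\Omega$ solves $-\Delta_\infty u=1$ in $\Omega\setminus\Cut(\Omega)$ and vanishes on $\partial\Omega$. One then checks $\phi_\Omega\in C^1(\overline\Omega)$: it is as regular as $d_{\partial\Omega}$ near $\partial\Omega$, while at $x_0\in\Cut(\Omega)=\high(\Omega)$ the $1$-Lipschitz bound $\rho_\Omega-d_{\partial\Omega}(x)\le|x-x_0|$ gives $0\le\phi_\Omega(x_0)-\phi_\Omega(x)\le c_0|x-x_0|^{4/3}$, so $\phi_\Omega$ is differentiable at $x_0$ with $\nabla\phi_\Omega(x_0)=0$, and $|\nabla\phi_\Omega|=3^{1/3}(\rho_\Omega-d_{\partial\Omega})^{1/3}\to 0$ as one approaches $x_0$.

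It then remains to verify the viscosity inequalities at $x_0\in\Cut(\Omega)=\high(\Omega)$. Any $\psi\in C^2$ touching $\phi_\Omega$ from above at $x_0$ satisfies $\nabla\psi(x_0)=\nabla\phi_\Omega(x_0)=0$, hence $\Delta_\infty\psi(x_0)=0$, so the subsolution inequality $-\Delta_\infty\psi(x_0)\le 1$ holds trivially. For the supersolution inequality I would prove that no $\psi\in C^2$ can touch $\phi_\Omega$ from below at $x_0$, so that this requirement is vacuous: if $\psi\le\phi_\Omega$ near $x_0$ with $\psi(x_0)=\phi_\Omega(x_0)$, then $\nabla\psi(x_0)=0$ and hence $\psi(x)\ge\phi_\Omega(x_0)-C|x-x_0|^2$ nearby, whereas, picking a unit vector $p$ in the superdifferential of $d_{\partial\Omega}$ at $x_0$, semiconcavity of $d_{\partial\Omega}$ near $x_0$ gives $d_{\partial\Omega}(x_0-tp)\le\rho_\Omega-t/2$ for small $t>0$, so that $\phi_\Omega(x_0)-\phi_\Omega(x_0-tp)\ge c_0(t/2)^{4/3}$, incompatible with the quadratic lower bound as $t\to 0$. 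Finally, $\phi_\Omega$ is the unique viscosity solution of \eqref{f:dirich}, \emph{a fortiori} the unique web one of class $C^1(\overline\Omega)$, by the comparison principle of \cite{LuWang}.

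For part (b), let $u=v(d_{\partial\Omega})\in C^1(\overline\Omega)$ be a web solution. On $\Omega\setminus\Cut(\Omega)$, where $d_{\partial\Omega}$ is smooth with unit gradient, restricting $u$ and the admissible test functions to the inward normal rays, along which $d_{\partial\Omega}$ is an arc-length parameter, shows that $v$ is a viscosity solution of the one-dimensional equation $-(v')^2v''=1$ on $(0,\rho_\Omega)$; this forces $v\in C^2$ on $\{v'\ne 0\}$, where $v'(s)=(3(a-s))^{1/3}$ for a constant $a$, so $v'$ vanishes only at $s=a$. The assumption that $u$ be of class $C^1$ up to the compact set $\Cut(\Omega)\subset\Omega$ now enters decisively: at a genuine non-differentiability point $x_0$ of $d_{\partial\Omega}$ the gradient $\nabla d_{\partial\Omega}$ cannot converge as $x\to x_0$ (otherwise $d_{\partial\Omega}$ would be differentiable at $x_0$), so continuity of $\nabla u=v'(d_{\partial\Omega})\,\nabla d_{\partial\Omega}$ forces $v'(d_{\partial\Omega}(x_0))=0$, i.e.\ $d_{\partial\Omega}(x_0)=a$. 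Since $\high(\Omega)\subseteq\Sigma(\Omega)$ and $d_{\partial\Omega}\equiv\rho_\Omega$ on $\high(\Omega)$, this yields $a=\rho_\Omega$ (so in fact $u=\phi_\Omega$), whence $\Sigma(\Omega)\subseteq\{d_{\partial\Omega}=\rho_\Omega\}=\high(\Omega)$ and therefore $\Cut(\Omega)=\overline{\Sigma(\Omega)}\subseteq\high(\Omega)$; combined with the always valid inclusion $\high(\Omega)\subseteq\Cut(\Omega)$, this gives the claimed equality.

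The step I expect to be the main obstacle is exactly the control of $d_{\partial\Omega}$ near $\Sigma(\Omega)$. The inequality used above in the schematic form $d_{\partial\Omega}(x_0-tp)\le\rho_\Omega-t/2$ has to be sharpened, in the complete argument, into a quantitative estimate of how $d_{\partial\Omega}$ (equivalently $\phi_\Omega$) detaches from its linear behaviour at singular points, calibrated to the $|x|^{4/3}$-scaling of the solutions of $-\Delta_\infty u=1$; this refined estimate is precisely what makes it possible to build admissible viscosity test functions also at those singular points where $\nabla d_{\partial\Omega}$ does not genuinely oscillate, and it is here that the distance estimate of \cite{CFb} is invoked.
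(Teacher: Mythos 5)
Your part (a) is correct and is essentially the paper's own argument for Theorem \ref{t:a}, streamlined by the $C^2$ hypothesis: off $\Cut(\Omega)$ the function $\phi_\Omega$ is a classical solution because $d_{\partial\Omega}$ is $C^2$ there, and at points of $\Cut(\Omega)=\high(\Omega)$ the two observations you make (any test function from above must have vanishing gradient; no $C^2$ function can touch from below because $\phi_\Omega$ drops like $c_0t^{4/3}$ along a projection direction) are exactly the paper's, with uniqueness again delegated to \cite{LuWang}. For part (b), your use of the $C^1(\overline\Omega)$ hypothesis to force $v'(d_{\partial\Omega}(x_0))=0$ at singular points, hence $a=\rho_\Omega$ via $\high(\Omega)\subseteq\Sigma(\Omega)$ and then $\Sigma(\Omega)\subseteq\high(\Omega)$, is the same idea as the paper's Appendix, which shows precisely that in the differentiable case no test-function construction is needed.

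The genuine gap is in the two steps of part (b) that you assert in one sentence. First, ``restricting $u$ and the admissible test functions to the inward normal rays shows that $v$ is a viscosity solution of $-(v')^2v''=1$'' has the logic of the transfer reversed: to deduce the one-dimensional viscosity property you must \emph{lift} a one-dimensional test function $\phi$ touching $v$ at $t_0$ to an $n$-dimensional $C^2$ test function touching $u$ at a suitably chosen point $x_0$ with $d_{\partial\Omega}(x_0)=t_0$ on a maximal ray (e.g.\ $\varphi=\phi\circ d_{\partial\Omega}$, which is admissible here only because $\partial\Omega\in C^2$ makes $d_{\partial\Omega}$ of class $C^2$ on $\Omega\setminus\Cut(\Omega)$, or the paper's composition with $\rho_\Omega-|x-q_0|$, which needs no regularity); restricting $n$-dimensional test functions to rays proves nothing. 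This transfer is exactly the content of Proposition \ref{p:caratt}, and it is where the real work lies. Second, the classification ``$v\in C^2$ on $\{v'\neq0\}$ with $v'(s)=(3(a-s))^{1/3}$'' for an arbitrary continuous one-dimensional viscosity solution is stated without proof; it is true, but it requires either a viscosity ODE analysis (excluding corners, identifying the profile) or, as the paper does, a comparison with the explicit family $g_r$ together with the Lu--Wang uniqueness theorem, after the preliminary estimate \eqref{f:forigin} guaranteeing that a matching $r$ as in \eqref{f:rgiusto} exists. Finally, the closing remark misattributes the key estimate: the refined bound on $d_{\partial\Omega}$ near singular points is Theorem \ref{p:estid} of this paper, not a result of \cite{CFb}, and in the $C^1(\overline\Omega)$ setting of the statement it is not needed at all -- indeed your own argument never uses it.
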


To some extent surprisingly, this result seems to indicate in particular that symmetry does not hold for the problem under study, namely that there exists some regular domain, different from a disk, where  (\ref{f:dirich}) admits a web viscosity solution. 
Actually the examples given in \cite{butkaw} of non-spherical domains $\Omega$ with
$\Cut(\Omega ) = \high (\Omega)$ are  of the form
$$
\Omega _\gamma := \big \{ x \in \R ^ 2 \ :\ d_\gamma (x) < r \big \}\, ,
$$
where $d_\gamma$ is the distance from a $C ^ {1,1}$-curve $\gamma:[ 0, L]  \to \R ^2$,   with $\gamma (0) \neq \gamma (L)$.   

The starting point of our investigation is the observation that, in fact, none of the domains  $\Omega _\gamma$ can have a $C ^2$ boundary, unless $\gamma$ is a singleton and $\Omega_\gamma$ is a disk.  
More generally, the simultaneous validity of the two conditions  
$\Cut(\Omega ) = \high (\Omega)$ and $\partial \Omega \in C ^2$, implies that $\Omega$ is a ball as soon as 
$\Omega \subset \R ^2$ is simply connected, or $\Omega\subset \R ^n$ is convex. 
This follows from some geometrical results we proved in a recent paper (see \cite[Thm.\ 6 and Thm.\ 12]{CFb}).  

In this perspective, it is natural to inquire about the validity of Theorem \ref{teoweb} when no regularity assumptions on the domain $\Omega$ and on the solution $u$ are made. Our results provide a complete answer to this question and can be summarized as follows:

\begin{itemize}
\item[$\bullet$]  Assume that  $\Omega$ is an open bounded domain, satisfying $\Cut (\Omega) = \high (\Omega)$ (and no  further  regularity requirement). Then the function $\phi _\Omega$ is still the unique solution to problem (\ref{f:dirich})  (see Theorem \ref{t:a}). 

\medskip
\item[$\bullet$] Assume that problem (\ref{f:dirich})  admits a web viscosity solution $u$ (which a priori has no further  regularity besides continuity). 
Then $u = \phi _\Omega$ and there holds
$\Cut (\Omega) = \high (\Omega)$ (see Theorem \ref{t:b}). Consequently, if the space  dimension is $n=2$, $\Omega$ has the special form 
of a tubular neighborhood around a $C ^ {1,1}$ manifold, so that it fails in general to have radial symmetry (see Corollary \ref{corgeo1}).  In spite, if one assumes that $\partial \Omega \in C ^2$, then $\Omega$ must be necessarily a ball under the following additional restrictions:  either $\Omega$ is convex, or $n = 2$ and $\Omega$ is simply connected  (see Corollary \ref{corgeo2}). \end{itemize}

\medskip
We advertise that these results cannot be obtained by minor modifications of the arguments used in \cite{butkaw} to prove  Theorem \ref{teoweb}, but require a delicate analysis based on viscosity methods. 
In particular, the proof of Theorem \ref{t:b} relies on the construction of suitable viscosity test functions and involves  a result which may have an autonomous interest, that is a new estimate of the distance function $d _{\partial \Omega}$ near singular points (see Theorem \ref{p:estid}). 

It remains by now an open problem, which seems to be quite challenging,  to establish whether the conclusion $\Cut (\Omega) = \high (\Omega)$ of Theorem \ref{t:b} remains valid under the weaker assumption that the overdetermined problem (\ref{overdet}) admits a solution.
A major difficulty to deal with this problem is the lackness of any information about the regularity properties of the solution $u$ to \eqref{f:dirich} beyond the local Lipschitz regularity. In particular, the expected regularity is not higher than that expected for infinity-harmonic functions, namely  $C ^ {1, \alpha}$ regularity. (At present, such regularity has been settled only in two space dimensions by Evans and Savin in \cite{EvSav}; let us also recall that infinity-harmonic functions turn out to be  everywhere differentiable in arbitrary space dimensions, see \cite{EvSm}.)  The study of such regularity issues for the solution to problem (\ref{f:dirich}), as well as the investigation of its possible concavity-like properties, are in our opinion interesting topics for further research.

 \bigskip
{\maius Outline of the paper.}  
In Section \ref{seca} we deal with the sufficiency of the condition $\Cut (\Omega) = \high (\Omega)$ for the existence of web solution to problem (\ref{f:dirich}). Necessity is more delicate and it is proved in Section \ref{secb}, relying on a geometric result given in Section \ref{secgeo}. 
Finally in the Appendix we show how the proof of necessity 
can be considerably simplified if the solution is assumed a priori to be differentiable (which might happen to be not restrictive in the light of the results in  \cite{EvSm,EvSav}).

\bigskip
{\maius Notation.}
Throughout the paper,  
$\Omega$ will always denote a non-empty open bounded domain
of $\R^n$.
A point $x\in\Omega$ will be called \textsl{regular}
if the distance function from the boundary
$d_{\partial\Omega}$ is differentiable at $x$,
and \textsl{singular} otherwise.
The \textsl{singular set} of $\Omega$ (or of $d_{\partial\Omega}$), 
i.e.\ the set of all singular points of $\Omega$, will be
denoted by $\Sigma(\Omega)$.
We shall denote by $\Cut (\Omega)$ and $\high (\Omega)$ the sets introduced respectively in  (\ref{cut}) and (\ref{high}), by $\rho _\Omega$ the maximum of $d _{\partial \Omega}$ on $\overline \Omega$, and by  $\phi _\Omega$ the function defined in (\ref{defphi}). 
Moreover, we set
\begin{equation}\label{defg}
g(t) := c_0 \left[ \rho_\Omega^{4/3} - (\rho_\Omega - t)^{4/3}
\right], \qquad
t \in [0, \rho_\Omega],
\end{equation}
so that the function $\phi _\Omega$ can also be rewritten as
$$\phi_{\Omega}(x) = g(d_{\partial \Omega}(x))\, ,  \qquad x\in\overline{\Omega}\,. $$

Following \cite{CHL},  
a viscosity solution  to the equation $-\Delta_\infty u -1 = 0$
is a function
$u\in C^0({\Omega})$  which is both a viscosity
subsolution, i.e.
\begin{equation}\label{f:subsol}
-\Delta_\infty\varphi(x_0)-1\leq 0\
\quad
\text{whenever}\
\varphi\in C^2(\Omega)\
\text{and $\varphi-u$ has a local minimum at $x_0$},
\end{equation}
and a viscosity super-solution, i.e. 
\begin{equation}\label{f:supersol}
-\Delta_\infty\varphi(x_0)-1\geq 0\
\quad
\text{whenever}\
\varphi\in C^2(\Omega)\
\text{and $\varphi-u$ has a local maximum at $x_0$}.
\end{equation}
By a viscosity solution to the Dirichlet problem \eqref{f:dirich}
we mean a function $u\in C^0(\overline{\Omega})$ such that
$u=0$ on $\partial\Omega$ and $u$ is a viscosity solution
to $-\Delta_\infty u = 1$ in $\Omega$.

\bigskip
{\maius Acknowledgments.}
The authors would like to thank Italo Capuzzo Dolcetta 
for some useful discussions.

\section{Sufficiency of the condition $\Cut (\Omega) = \high (\Omega)$. } \label{seca}

In this section we prove: 

\begin{theorem}
\label{t:a}
Let $\Omega\subset\R^n$ be an open bounded connected domain,
satisfying $\Cut(\Omega) = \high(\Omega)$.
Then the function $\phi_{\Omega}$ is the unique viscosity solution to the Dirichlet boundary value problem $(\ref{f:dirich})$. 

Moreover, if $\partial \Omega$ is of class $C^1$,
then the function $\phi_{\Omega}$ is also
the unique viscosity solution
to the overdetermined boundary value problem \eqref{overdet}, with $c = (3 \rho_{\Omega})^{1/3}$.
\end{theorem}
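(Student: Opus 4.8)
The plan is to verify directly that $\phi_\Omega = g \circ d_{\partial\Omega}$, with $g$ as in \eqref{defg}, is a viscosity solution of \eqref{f:dirich}, and then to invoke the uniqueness result of Lu and Wang \cite{LuWang} cited in the Introduction. The key algebraic fact is that $g$ solves the one-dimensional ODE $-g'(t)^2 g''(t) = 1$ on $(0,\rho_\Omega)$ with $g(0) = 0$; this is exactly the ansatz recalled from \cite[Section 2]{butkaw} and is a routine computation from \eqref{defphi}. Note also that $g' > 0$ on $[0,\rho_\Omega)$ with $g'(0) = (3\rho_\Omega)^{1/3}$, while $g'(\rho_\Omega) = 0$ and $g$ is concave; these facts will be used for the boundary statement and for checking the inequalities at singular points.

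For the supersolution property one tests $\phi_\Omega$ from above: if $\varphi \in C^2$ and $\varphi - \phi_\Omega$ has a local maximum at $x_0 \in \Omega$, I must show $-\Delta_\infty\varphi(x_0) \geq 1$. If $x_0$ is a regular point of $d_{\partial\Omega}$, then near $x_0$ the distance function is smooth with $|\nabla d_{\partial\Omega}| = 1$ and $\Delta_\infty d_{\partial\Omega} = 0$ (the integral curves of $\nabla d_{\partial\Omega}$ are segments), so $\phi_\Omega \in C^2$ near $x_0$ and one computes $-\Delta_\infty\phi_\Omega(x_0) = -g'^2 g'' = 1$ directly, then transfers the inequality to $\varphi$ by the usual touching argument (the Hessians compare in the direction $\nabla d_{\partial\Omega}(x_0)$). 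If instead $x_0$ is a singular point of $d_{\partial\Omega}$, then by the hypothesis $\Cut(\Omega) = \high(\Omega)$ we have $d_{\partial\Omega}(x_0) = \rho_\Omega$, so $\phi_\Omega$ attains its maximum $c_0\rho_\Omega^{4/3}$ at $x_0$; then $\varphi$ too has a local maximum at $x_0$, hence $\nabla\varphi(x_0) = 0$ and $-\Delta_\infty\varphi(x_0) = 0 \not\geq 1$ — so I need the inequality to go the right way, which it does \emph{not} trivially. The correct route here is the standard one for web solutions: use the fact (from concavity of $g$ together with $|\nabla d_{\partial\Omega}| \le 1$ and semiconcavity of $d_{\partial\Omega}$ away from $\partial\Omega$, or directly that $\phi_\Omega$ is semiconcave) to rule out $C^2$ functions touching from above at an interior maximum with the wrong sign — more precisely, no smooth $\varphi$ can touch $\phi_\Omega$ from above at a point where $\phi_\Omega$ has a genuine upward corner, because $\phi_\Omega$ has two inward segments meeting there; if $\varphi \ge \phi_\Omega$ near $x_0$ with equality at $x_0$, restricting to the two segments forces $\nabla\varphi(x_0)$ to have nonpositive and nonnegative components along opposite directions, which is consistent only with $\nabla\varphi(x_0)=0$, and then $D^2\varphi(x_0)$ must be $\ge$ the (distributional) second derivative of $\phi_\Omega$ along \emph{every} incoming segment, but $g''(\rho_\Omega^-) = -\infty$, contradiction. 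Hence there is simply no admissible test function at singular points, and \eqref{f:supersol} holds vacuously there.

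For the subsolution property one tests from below: if $\varphi - \phi_\Omega$ has a local minimum at $x_0$, I must show $-\Delta_\infty\varphi(x_0) \le 1$. At regular points this is again the direct computation plus the touching argument as above. At singular points one exploits that $d_{\partial\Omega}$ is semiconcave in $\Omega$, hence has a nonempty superdifferential everywhere, so one can find a paraboloid from above; writing $d_{\partial\Omega}(x) \le d_{\partial\Omega}(x_0) + \langle p, x - x_0\rangle + C|x-x_0|^2$ with $|p| = 1$ (here I would use the structure of the superdifferential of the distance, $p$ a unit vector pointing toward a nearest boundary point), compose with the increasing concave $g$ and feed the resulting smooth upper bound into the inequality; since $g$ is $C^2$ on $[0,\rho_\Omega)$ with the identity $-g'^2g'' = 1$, the subsolution inequality at $x_0$ follows by the same computation as in the regular case. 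The main obstacle, and the step requiring genuine care, is exactly this handling of the singular set $\Sigma(\Omega)$: one must verify that the semiconcavity-plus-concavity geometry genuinely forbids bad test functions for the supersolution test and genuinely supplies good ones for the subsolution test, rather than waving at it — this is where the hypothesis $\Cut(\Omega) = \high(\Omega)$ is used decisively, to guarantee that every singular point is a maximum point of $\phi_\Omega$.

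Finally, for the overdetermined conclusion, assume $\partial\Omega \in C^1$. Then near any boundary point the distance $d_{\partial\Omega}$ is $C^1$ with $\nabla d_{\partial\Omega} = \nu$ on $\partial\Omega$, so $\phi_\Omega \in C^1(\overline\Omega)$ and $\frac{\partial\phi_\Omega}{\partial\nu} = g'(0)\,|\nabla d_{\partial\Omega}| = g'(0) = (3\rho_\Omega)^{1/3}$ on $\partial\Omega$; combined with the already-established fact that $\phi_\Omega$ solves \eqref{f:dirich}, this shows $\phi_\Omega$ solves \eqref{overdet} with $c = (3\rho_\Omega)^{1/3}$, and uniqueness in the smaller class follows a fortiori from uniqueness in \eqref{f:dirich}.
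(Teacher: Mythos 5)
The overall strategy (verify that $\phi_\Omega$ is a viscosity solution, then quote the Lu--Wang uniqueness theorem; read off the normal derivative when $\partial\Omega\in C^1$) is the paper's, but the verification at singular points --- which, under $\Cut(\Omega)=\high(\Omega)$, are exactly the points where $d_{\partial\Omega}=\rho_\Omega$, and are the heart of the matter --- is not carried out correctly. For the subsolution test \eqref{f:subsol} (i.e.\ $\varphi$ touching $\phi_\Omega$ \emph{from above}) admissible test functions do exist at a singular point $x_0$ (the constant $\phi_\Omega(x_0)$, for instance), so the inequality cannot be declared vacuous and must be proved for \emph{every} such $\varphi$. Your proposed argument --- take the semiconcavity paraboloid above $d_{\partial\Omega}$, compose with $g$, and ``repeat the computation of the regular case'' --- does not do this: exhibiting one particular smooth function above $\phi_\Omega$ gives no information on an arbitrary test function touching from above, and the identity $-g'(t)^2g''(t)=1$ is not available at $t=\rho_\Omega$, where $g'(\rho_\Omega)=0$ and $g''(\rho_\Omega)=-\infty$. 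The missing idea is the comparison with the radial profile $v(x)=g(\rho_\Omega-|x-x_0|)$ of Lemma~\ref{l:primo}: since $d_{\partial\Omega}(x)\ge\rho_\Omega-|x-x_0|$ and $g$ is increasing, one has $\phi_\Omega\ge v$ with equality at $x_0$, so any $\varphi$ touching $\phi_\Omega$ from above also touches $v$ from above; the $|x-x_0|^{4/3}$ drop of $v$ then forces $\nabla\varphi(x_0)=0$, whence $-\Delta_\infty\varphi(x_0)=0\le 1$. For the supersolution test \eqref{f:supersol} your conclusion (no admissible test function at $x_0$) is the correct one, but your justification has the touching direction backwards: \eqref{f:supersol} concerns $\varphi$ touching $\phi_\Omega$ \emph{from below}; for such $\varphi$ one gets $\nabla\varphi(x_0)=0$ (because $x_0$ maximizes $\phi_\Omega$) and then the estimate $\phi_\Omega(x_0+t e)\le\phi_\Omega(x_0)-c_0t^{4/3}$ along the direction $e$ toward a nearest boundary point contradicts the second-order Taylor expansion of $\varphi$. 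As you wrote it (touching from above, ``$D^2\varphi(x_0)\ge-\infty$'') there is no contradiction at all, and smooth functions touching from above do exist.

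At regular points there is a second gap: you assume $d_{\partial\Omega}$ is smooth near $x_0$, hence $\phi_\Omega\in C^2$ there, and compare Hessians of $\varphi$ and $\phi_\Omega$. But the theorem assumes no regularity of $\partial\Omega$, and differentiability of $d_{\partial\Omega}$ at a point does not give $C^2$ regularity nearby (already for a boundary that is $C^1$ but not $C^{1,1}$). The paper avoids this by a purely one-dimensional comparison along the segment $[q_0,p_0]$ through $x_0$, with $q_0$ a nearest boundary point and $p_0\in\high(\Omega)$ --- note that the hypothesis $\Cut(\Omega)=\high(\Omega)$ is used again here, to guarantee $|p_0-q_0|=\rho_\Omega$ and $\phi_\Omega(q_0+t\nu)=g(t)$ along the whole segment --- so that only differentiability of $\phi_\Omega$ \emph{at} $x_0$ is needed and the second derivative of the test function is compared with $g''$ only in the direction $\nu$. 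Your remark that ``the Hessians compare in the direction $\nabla d_{\partial\Omega}(x_0)$'' is the right idea, but as stated it rests on the unjustified local $C^2$ regularity. The uniqueness step via Lu--Wang and the computation $\nabla\phi_\Omega=g'(0)\nu=(3\rho_\Omega)^{1/3}\nu$ on a $C^1$ boundary are fine and coincide with the paper.
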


\begin{remark}
Notice that the validity of the condition $\partial \Omega$ of class $C ^1$ is not guaranteed by the coincidence of cut locus and high ridge. A simple example of domain $\Omega$ with $\Cut (\Omega) = \high (\Omega)$ but $\partial \Omega \not \in C ^1$ can be found in \cite[Remark 7]{CFb}.  
However, under the sole assumption $\Cut (\Omega) = \high (\Omega)$, the existence of a solution to the overdetermined boundary problem (\ref{overdet}) can be inferred up to replacing $\Omega$ by a parallel set. More precisely, set $S:= \Cut (\Omega) = \high (\Omega)$ and, for $r>0$,  denote by $S_r$ the set of points with distance from $S$ less than $r$.  It turns out that, for $r$ sufficiently small, $\Cut (S_r) = \high (S_r)$ and $\partial S _ r \in C^1$ (see \cite[Proposition 13 and Lemma 16]{CFb}). Hence, for such values of $r$, there exists a unique viscosity solution
to the overdetermined boundary value problem \eqref{overdet} on $S_r$, given by the corresponding function $\phi _{S_r}$.  
\end{remark}

For the proof of Theorem \ref{t:a} we need the following simple lemma.

\begin{lemma} \label{l:primo}
Let $\Omega\subset\R^n$ be an open bounded connected domain
and let $g$ be defined by $(\ref{defg})$. For every fixed point $x_0 \in \high(\Omega)$, the radial-profile function
\begin{equation}\label{defv}
v(x) := g(\rho_\Omega  - |x-x_0|)\, , \qquad x \in {\Omega} \,. 
\end{equation}
is a viscosity solution the equation
to $-\Delta_{\infty} v = 1$ in $\Omega$. 
\end{lemma}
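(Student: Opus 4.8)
Here is how I would prove Lemma~\ref{l:primo}. The idea is to verify the two viscosity inequalities directly, splitting $\Omega$ into the punctured domain $\Omega\setminus\{x_0\}$, where $v$ is smooth and the equation can be checked classically, and the single point $x_0$, where $v$ attains a maximum with a $4/3$\nobreakdash-homogeneous cusp profile. The first step is to rewrite $v$ in closed form: substituting $t=\rho_\Omega-|x-x_0|$ in $(\ref{defg})$ gives
\[
v(x)=c_0\bigl(\rho_\Omega^{4/3}-|x-x_0|^{4/3}\bigr),
\]
the right-hand side being the natural extension of the formula to all of $\Omega$ (it coincides with $g(\rho_\Omega-|x-x_0|)$ on $\overline{B(x_0,\rho_\Omega)}$). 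Since $|x-x_0|^{4/3}=(|x-x_0|^2)^{2/3}$ is of class $C^\infty$ on $\Omega\setminus\{x_0\}$, it suffices there to check $-\Delta_\infty v=1$ classically. Writing $v(x)=\psi(|x-x_0|)$ with $\psi(r)=c_0(\rho_\Omega^{4/3}-r^{4/3})$ and using that the radial direction is an eigenvector of the Hessian of a radial function, one obtains $\Delta_\infty v=\psi''(r)\,\psi'(r)^2$ at every point with $r=|x-x_0|>0$; plugging in $\psi'(r)=-\tfrac43 c_0 r^{1/3}$ and $\psi''(r)=-\tfrac49 c_0 r^{-2/3}$ yields $\Delta_\infty v=-\tfrac{64}{81}c_0^3$, and the choice $c_0=3^{4/3}/4$ makes $c_0^3=81/64$, so $\Delta_\infty v\equiv -1$ on $\Omega\setminus\{x_0\}$. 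Hence $v$ is a classical, and therefore a viscosity, solution there.

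It remains to check $(\ref{f:subsol})$ and $(\ref{f:supersol})$ at $x_0$, where $v(x)=v(x_0)-c_0|x-x_0|^{4/3}$ is a concave $C^1$ function with a strict maximum. For the super-solution condition $(\ref{f:supersol})$, any $\varphi\in C^2(\Omega)$ with $\varphi-v$ having a local maximum at $x_0$ would satisfy $\varphi(x)\le\varphi(x_0)-c_0|x-x_0|^{4/3}$ near $x_0$; in particular $x_0$ would be a local maximum of $\varphi$, forcing $\nabla\varphi(x_0)=0$ and hence $\varphi(x)=\varphi(x_0)+O(|x-x_0|^2)$, which is incompatible with the previous bound since $|x-x_0|^2=o(|x-x_0|^{4/3})$. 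Thus no such $\varphi$ exists and $(\ref{f:supersol})$ holds vacuously at $x_0$. For the sub-solution condition $(\ref{f:subsol})$, if $\varphi\in C^2(\Omega)$ is such that $\varphi-v$ has a local minimum at $x_0$, then $\varphi(x)-\varphi(x_0)\ge -c_0|x-x_0|^{4/3}$ near $x_0$; evaluating along $x=x_0-t\,\nabla\varphi(x_0)$ for small $t>0$ and expanding gives $-t|\nabla\varphi(x_0)|^2+O(t^2)\ge -c_0 t^{4/3}|\nabla\varphi(x_0)|^{4/3}$, and dividing by $t$ and letting $t\to 0^+$ forces $\nabla\varphi(x_0)=0$. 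Consequently $\Delta_\infty\varphi(x_0)=\langle D^2\varphi(x_0)\nabla\varphi(x_0),\nabla\varphi(x_0)\rangle=0$, so $-\Delta_\infty\varphi(x_0)-1=-1\le 0$. This establishes $(\ref{f:subsol})$ at $x_0$ and completes the proof.

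The routine radial computation is the bulk of the argument but presents no real difficulty; the step I expect to require the most care is the local analysis at the maximum point $x_0$, where one must use the precise $4/3$\nobreakdash-homogeneous behaviour of $v$ to see simultaneously that \emph{no} $C^2$ function can touch $v$ from below there (so the super-solution test is vacuous) while \emph{every} $C^2$ function touching $v$ from above must be critical at $x_0$ (so the sub-solution test reduces to the trivial inequality $-1\le 0$).
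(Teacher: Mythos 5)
Your proof is correct and follows essentially the same route as the paper: a classical radial computation showing $-\Delta_\infty v=1$ on $\Omega\setminus\{x_0\}$, and at $x_0$ the observation that any $C^2$ test function touching $v$ from above must have vanishing gradient (so the subsolution inequality is trivial), while no $C^2$ function can touch $v$ from below because of the $|x-x_0|^{4/3}$ behaviour (the paper phrases these two facts as $g'(\rho_\Omega)=0$ and $g''(\rho_\Omega)=-\infty$). Your version merely spells out the details the paper leaves implicit, including the harmless extension of $g$ beyond $[0,\rho_\Omega]$ via the closed formula $v(x)=c_0\bigl(\rho_\Omega^{4/3}-|x-x_0|^{4/3}\bigr)$.
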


\proof We observe that $v$ is a classical solution to the equation $-\Delta_{\infty} v = 1$ on the set $\Omega\setminus\{x_0\}$, because it is therein of class $C^2$, with 
\[
-\Delta_{\infty}v (x) = -g''(\rho_\Omega -|x-x_0|)\,
\left[g'(\rho_\Omega -|x-x_0|)\right]^2 = 1 \qquad
\forall x \in \Omega \setminus \{x_0\}. 
\]
On the other hand, by the assumption 
$d (x_0) = \rho_\Omega$, the conditions $g' ( \rho_\Omega ) = 0$ and $g'' ( \rho_\Omega) = - \infty$ ensure respectively that 
any $C ^2$ function $\varphi$ touching $v$ from above  at $x_0$  satisfies $-\Delta _\infty \varphi (x_0) = 0\leq 1$, and that the class of $C^2$ functions touching $v$ at $x_0$ from below is empty. Thus $v$ satisfies the definition of viscosity solution also at $x_0$.   \qed

\bigskip

{\bf Proof of Theorem \ref{t:a}}.

Let us prove that the function  $\phi_{\Omega}$ is a viscosity
solution to problem \eqref{f:dirich}.  

Clearly, $\phi _\Omega$ satisfies the null Dirichlet condition on the boundary. 

We have to show that $\phi_{\Omega}$ satisfies the definition of viscosity solution
to the pde in \eqref{f:dirich} at every point
 $x_0\in\Omega$. 
To that aim, we distinguish the two cases $x_0 \in \Cut (\Omega)$ and $x_0 \not \in \Cut (\Omega)$. 

For simplicity of notation, since no ambiguity may arise, in the remaining the proof we denote by $d:= d _{\partial \Omega}$ the distance function from 
$\partial \Omega$.

\medskip
{\it Case $x_0\in\Cut(\Omega)$}.  

We check first that $\phi_{\Omega}$ 
satisfies \eqref{f:subsol}
at $x_0$.  
We notice that, 
if $v$  is defined by (\ref{defv}), the functions $\phi_\Omega$ and $v$ satisfy
\begin{equation}\label{f:comparison}
\phi _\Omega (x_0) = v( x_0)  \qquad \hbox{ and } \qquad \phi _\Omega (x) \geq v  (x) \quad \forall x \in \overline \Omega \setminus \{ x_0 \}\,. 
\end{equation}
Namely, the first condition is due to the equality $d (x_0) = \rho_\Omega$ (recall that by hypothesis $\Cut (\Omega) = \high (\Omega)$), and the second one follows
by using the monotonicity of $g$ and the triangular inequality $d (x) \geq \rho_\Omega - |x- x_0| = d (x_0) - |x-x_0|$.  
 
In view of (\ref{f:comparison}), 
if  $\varphi$ is a $C^2$ function
touching $\phi_{\Omega}$ at $x_0$ from above, then $\varphi$ touches also $v$ from above. 
Since from Lemma \ref{l:primo} we know that $-\Delta_{\infty}v= 1$ in $\Omega$, we infer that 
$
-\Delta_{\infty}\varphi(x_0) \leq 1$, and hence that $\phi _\Omega$ 
satisfies \eqref{f:subsol}
at $x_0$. 

The proof  that $\phi_{\Omega}$ 
satisfies also \eqref{f:supersol}
at $x_0$ is straightforward. Indeed, since
$\Cut (\Omega) = M (\Omega)$ and $g'' (\rho_\Omega) = - \infty$, the class of $C^2$ functions touching
$\phi_{\Omega}$ at $x_0$ from below turns out to be empty.

\medskip
{\it Case $x_0\not\in\Cut(\Omega)$}.  

Let $p_0\in\Cut(\Omega)$ and $q_0 \in \pi _{\partial \Omega} (x_0)$ be such that
$x_0 \in ]p_0, q_0[$ (the open segment joining $p_0$ and $q_0$), with $|q_0 - x_0| = d(x_0) $ and $|p_0-q_0| = \rho_\Omega$
(where the latter equality holds true by the assumption $\Cut (\Omega) = M (\Omega)$). 
Set
$$\nu := \frac{p_0-q_0}{|p_0-q_0|} = \nabla d (x_0)\,, $$
and let  $\varphi$ be a $C^2$ function
touching $\phi_{\Omega}$ at $x_0$ from above. 
Let us compare the two functions of one real variable defined for $t \in [0, \rho_\Omega] $ by
\begin{equation}\label{f:onevar}
h(t) := \varphi(y_0 + t\nu)  \qquad \hbox{ and  } \qquad g (t ) = \phi _\Omega ( y _0+ t \nu)\, . 
\end{equation}

Taking into account that $\varphi$ touches $\phi_{\Omega}$ at $x_0$ from above, and that $\phi _\Omega$ is differentiable at $x_0$, the functions $h$ and $g$ satisfy:
$$\begin{array}{ll}
& h ( d(x_0) ) = \varphi ( x_0) = \phi _\Omega (x_0) = g ( d (x_0)) \\ \noalign{\medskip}
& h' ( d(x_0) ) \nabla d ( x_0) = \nabla \varphi ( x_0) = \nabla \phi _\Omega (x_0) = g' ( d (x_0)) \nabla d ( x_0) \\ \noalign{\medskip}
& h (t) \geq g (t) \qquad \hbox{ near } t = d (x_0)\,.
\end{array}
$$
We infer that
\[
h''(d(x_0))  \geq g''(d(x_0)) \, ,  
\]
and in turn, recalling also that $g' ( d(x_0)) = h' ( d(x_0)) >0$,  that
\begin{equation}\label{miste}
(h' (d(x_0)) ^ 2 h''(d(x_0)) \geq (g' (d(x_0)) ^ 2 g''(d(x_0))\, . 
\end{equation}
Now, by definition the infinity-Laplacian of $\varphi$ is given by
\[
\Delta_{\infty} \varphi(x_0)  =
 \pscal{D^2\varphi(x_0) \nabla \varphi (x_0)}{\nabla \varphi (x_0)}= 
 h'(d(x_0))^2 h''(d(x_0))  \,.
\]
Then, by using (\ref{miste}) we obtain
\[
-\Delta_{\infty} \varphi(x_0) \leq -  (g' (d(x_0)) ^ 2 g''(d(x_0)) = 1\ ,
\]
where the last equality readily follows from the definition of $g$. Hence
$\phi_{\Omega}$ 
satisfies \eqref{f:subsol}
at $x_0$. 
The proof that $\phi_{\Omega}$ 
satisfies also \eqref{f:supersol}
at $x_0$ is completely analogous: if $\varphi$ is a $C^2$ function 
touching $\phi _\Omega$ at $x_0$ from below, it is enough to compare the two functions of one real variable 
defined by (\ref{f:onevar}), and argue in a similar way as above.

\smallskip 
We conclude that
$\phi_{\Omega}$ 
is a viscosity solution to the PDE in \eqref{f:dirich}. 
Since, by \cite[Thm.~5]{LuWang}, the viscosity solution to the Dirichlet boundary value problem
\eqref{f:dirich} is unique, the assertion that  $\phi_{\Omega}$ is the unique viscosity
solution follows.

\smallskip
Finally we observe that, 
if $\Omega$ is of class $C^1$, then the distance function $d$ is differentiable
also on $\partial\Omega$, and
$$\nabla\phi_{\Omega}(y) = g'(0)\nu(y) = ( 3 \rho _\Omega) ^ {1/3} \nu (y) \qquad \forall \, y\in\partial\Omega \,.$$
Therefore, $\phi _\Omega$ solves also the overdetermined boundary value 
problem (\ref{overdet}), with the value of the constant $c$ equal to  $( 3\rho _\Omega) ^ {1/3}$. 
\qed

\section{A geometric result on the distance function}\label{secgeo}

In this section we prove a new estimate on the distance function near singular points, which will be used as a crucial tool
to construct suitable viscosity test functions for problem (\ref{f:dirich}). 

We recall that the {\it Fr\'echet super-differential} of 
a function $u\in C^0(\Omega)$ at $x\in\Omega$ is defined by
\[
D^+ u (x) := 
\left\{ p \in \R ^n \ :\ \limsup _{ y \to x} 
\frac{u(y) - u(x) -\langle p, y-x \rangle }{|y-x| }  
\leq 0  \right\} \,.
\]
If $u$ is a locally Lipschitz function in $\Omega$,
the set $D^* u (x)$ of {\it reachable gradients} of $u$ at $x\in\Omega$ is the set of vectors 
$p \in \R^n$ for which
there exists a sequence $\{ x _h \} \subset \Omega \setminus \{ x \}$, 
with $u$ differentiable at $x_h$, such that
\[
\lim _h x _h = x \qquad \hbox{  and }  \qquad \lim _n \nabla u (x_h ) = p\,.
\]

Then the result reads:

\begin{theorem}\label{p:estid}
Let $\Omega\subset\R^n$ be an open set,
and let $x_0\in\Sigma(\Omega)$. 
Then for every $p\in D^+ d_{\partial \Omega}(x_0)\cap B_1(0)$
there exist
a constant $K>0$ and a unit vector
$\zeta\in\R^n$ 
satisfying the following property:
\begin{equation}\label{f:estidist0}
d_{\partial \Omega} (x) \leq d_{\partial \Omega} (x_0) + \pscal{p}{x-x_0}
- K\, |\pscal{\zeta}{x-x_0}| +\frac{1}{2 d_{\partial \Omega}(x_0)}
|x-x_0|^2,
\quad \forall x\in\Omega.
\end{equation}
In particular for every $c>0$ 
the inequality
\begin{equation}\label{f:estidist}
d_{\partial \Omega} (x) \leq d_{\partial \Omega} (x_0) + \pscal{p}{x-x_0}
- c \pscal{\zeta}{x-x_0}^2 +\frac{1}{2 d_{\partial \Omega}(x_0)}
|x-x_0|^2
\end{equation}
holds for every $x\in B_{\delta}(x_0)\cap\Omega$
with $\delta = K/c$.
Furthermore, if $p\neq 0$ then the vector $\zeta$ can
be chosen so that $\pscal{\zeta}{p}\neq 0$.
\end{theorem}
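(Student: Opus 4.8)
The plan is to produce, for $d:=d_{\partial\Omega}$ and $R:=d(x_0)$ (which is $>0$ since $x_0\in\Sigma(\Omega)\subset\Omega$ and $\Omega$ is open), a family of one-sided quadratic estimates at $x_0$ indexed by the feet of $d$, and then to combine two of them by a convex-geometry argument exploiting the hypothesis $p\in B_1(0)$.
\textbf{Step 1 (estimate from a single foot point).} Fix $y\in\partial\Omega$ realizing the distance, $|x_0-y|=R$ (such a point exists, e.g.\ by intersecting $\partial\Omega$ with $\overline{B_{2R}(x_0)}$), and set $\nu_y:=(x_0-y)/R$, a unit vector. For $x\in\Omega$ put $h:=x-x_0$; then $d(x)\le|x-y|$ and $|x-y|^2=|h+R\nu_y|^2=R^2+2R\pscal{\nu_y}{h}+|h|^2$. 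A one-line computation shows that $\Lambda:=R+\pscal{\nu_y}{h}+\tfrac1{2R}|h|^2$ is strictly positive — indeed $\Lambda\ge R-|h|+\tfrac1{2R}|h|^2=\tfrac1{2R}\big((R-|h|)^2+R^2\big)>0$ — and satisfies $\Lambda^2=|x-y|^2+\big(\pscal{\nu_y}{h}+\tfrac1{2R}|h|^2\big)^2\ge|x-y|^2$, whence $\Lambda\ge|x-y|\ge d(x)$. Thus
\[
d(x)\le R+\pscal{\nu_y}{x-x_0}+\tfrac1{2R}|x-x_0|^2\qquad\text{for every }x\in\Omega,
\]
an inequality that, decisively, depends \emph{affinely} on the slope $\nu_y$.

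\textbf{Step 2 (passage to a chord of $D^+d(x_0)$ through $p$).} I would then use the standard description of the distance function as a locally semiconcave function: the reachable gradients of $d$ at $x_0$ are precisely the unit vectors $\nu_y$ with $y$ a foot of $d$ at $x_0$, and $D^+d(x_0)=\conv\big(D^*d(x_0)\big)$ (see e.g.\ \cite{CFb}). Since the estimate of Step 1 is affine in $\nu_y$, averaging such estimates gives
\[
d(x)\le R+\pscal{q}{x-x_0}+\tfrac1{2R}|x-x_0|^2\qquad\text{for every }x\in\Omega\text{ and every }q\in D^+d(x_0).
\]
Now take $p\in D^+d(x_0)\cap B_1(0)$. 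As $D^*d(x_0)$ is compact and contained in the unit sphere $\mathbb S^{n-1}$, the extreme points of the compact convex set $D^+d(x_0)=\conv(D^*d(x_0))$ are all unit vectors; since $|p|<1$, $p$ is not extreme, hence it lies in the relative interior of a nondegenerate segment $[q^-,q^+]\subset D^+d(x_0)$. Shrinking it symmetrically about $p$ I may assume $p=\tfrac12(q^-+q^+)$, and I set $\zeta:=(q^+-q^-)/|q^+-q^-|$ and $K:=\tfrac12|q^+-q^-|>0$, so that $p\pm K\zeta=q^\pm\in D^+d(x_0)$.

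\textbf{Step 3 (conclusion, and the refinement for $p\ne0$).} Inserting $q=p+K\zeta$ and $q=p-K\zeta$ into the displayed inequality of Step 2 and taking the smaller of the two right-hand sides (using $\min\{Ks,-Ks\}=-K|s|$ with $s=\pscal{\zeta}{x-x_0}$) yields precisely \eqref{f:estidist0}. To get \eqref{f:estidist}, fix $c>0$ and let $\delta:=K/c$: if $|x-x_0|<\delta$ then $c\,|\pscal{\zeta}{x-x_0}|\le c|x-x_0|<K$, so $c\pscal{\zeta}{x-x_0}^2\le K|\pscal{\zeta}{x-x_0}|$, and substituting this bound into \eqref{f:estidist0} gives \eqref{f:estidist}. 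Finally, for $p\ne 0$ I would select the chord through $p$ so that its direction is not orthogonal to $p$: the admissible directions $\zeta$ are exactly those spanning the minimal face of $D^+d(x_0)$ containing $p$, and one chooses one with $\pscal{\zeta}{p}\ne0$, which is possible as long as that face is not contained in the hyperplane $\{\,q:\pscal{q}{p}=|p|^2\,\}$; for such a choice the construction above still produces a valid pair $(K,\zeta)$ in \eqref{f:estidist0}--\eqref{f:estidist}, now with the extra property $\pscal{\zeta}{p}\ne0$.

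I expect Step 1 and the passage from \eqref{f:estidist0} to \eqref{f:estidist} to be routine. The two substantive ingredients are the semiconcave structure $D^+d(x_0)=\conv D^*d(x_0)$ with $D^*d(x_0)\subset\mathbb S^{n-1}$ — which is exactly what makes the \emph{open}-ball hypothesis $p\in B_1(0)$ the right one, placing $p$ off the extreme boundary and hence on an honest chord of $D^+d(x_0)$ — and the last refinement, whose convex geometry is the most delicate point and is where the assumption $p\ne 0$ enters in an essential way.
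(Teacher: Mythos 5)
Your Steps 1--3 do prove \eqref{f:estidist0} and \eqref{f:estidist}, and essentially along the paper's own route: your single--foot--point inequality is the paper's Step 1 (there obtained from $\sqrt{1+t}\le 1+t/2$, here from your identity for $\Lambda^2$, which is the same computation), and the structural input $D^+d_{\partial\Omega}(x_0)=\conv D^*d_{\partial\Omega}(x_0)$ with $D^*d_{\partial\Omega}(x_0)\subset\partial B_1(0)$ is Proposition \ref{CS}. The only real difference is how the term $-K\,|\pscal{\zeta}{x-x_0}|$ is generated: since $|p|<1$ makes $p$ non-extreme, you put $p$ at the midpoint of a nondegenerate chord $[q^-,q^+]\subset D^+d_{\partial\Omega}(x_0)$ and take the minimum of the two averaged inequalities, whereas the paper keeps the minimum over all extreme representatives $p_i\in D^*d_{\partial\Omega}(x_0)$ of $p$ and bounds it by a support--function argument, which yields the whole set $Z$ of admissible directions and an explicit $K$. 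Your variant is a little leaner and is perfectly adequate for the estimates themselves; the deduction of \eqref{f:estidist} from \eqref{f:estidist0} is the same as in the paper.

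The genuine gap is the final clause. Your argument for $\pscal{\zeta}{p}\neq 0$ is explicitly conditional (``possible as long as the minimal face containing $p$ is not contained in the hyperplane $\{q:\pscal{q}{p}=|p|^2\}$''), and you never verify that condition; in fact it can fail, and for such $p$ no admissible $\zeta$ with $\pscal{\zeta}{p}\neq 0$ exists, so the clause cannot be established for \emph{every} $p\neq 0$. Take the paper's square example: $D^*d_{\partial Q}(x_0)=\{p_0,p_1\}$ with $p_0=(-\tfrac1{\sqrt2},-\tfrac1{\sqrt2})$, $p_1=(\tfrac1{\sqrt2},-\tfrac1{\sqrt2})$, and $p=\tfrac12(p_0+p_1)=(0,-\tfrac1{\sqrt2})\neq 0$. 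For $x=(0,x_2)$ with $|x_2|$ small one has exactly $d_{\partial Q}(x)=1-\tfrac{x_2}{\sqrt2}=d_{\partial Q}(x_0)+\pscal{p}{x-x_0}$, so \eqref{f:estidist0} forces $K\,|\pscal{\zeta}{x-x_0}|\le\tfrac12|x-x_0|^2$ along that segment, which for small $x_2$ is impossible unless $\pscal{\zeta}{e_2}=0$, i.e.\ unless $\pscal{\zeta}{p}=0$. (The paper's proof is silent on this clause as well; what is actually used later, in the proof of Theorem \ref{t:b}, is only the existence of \emph{some} admissible pair $(p,\zeta)$ with $\pscal{\zeta}{p}\neq 0$.) Within your scheme that weaker statement is easy to secure, and you should state and prove it in this form: since $x_0\in\Sigma(\Omega)$ there are two distinct $p_0,p_1\in D^*d_{\partial\Omega}(x_0)$; take the chord $[p_1,p_0]$ and $p=\lambda p_0+(1-\lambda)p_1$ with $\lambda\in(0,1)$, $\lambda\neq\tfrac12$, so that $p\in D^+d_{\partial\Omega}(x_0)\cap B_1(0)$, $p\neq 0$, and $\pscal{p_0-p_1}{p}=(2\lambda-1)\bigl(1-\pscal{p_0}{p_1}\bigr)\neq 0$, i.e.\ $\zeta=(p_0-p_1)/|p_0-p_1|$ works with $K=\min\{\lambda,1-\lambda\}\,|p_0-p_1|$. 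As written, the last step of your proposal is not a proof of the ``furthermore'' claim, and no argument can make it hold for every $p\neq 0$.
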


\begin{remark}
We shall see in a moment (cf.\ Proposition \ref{CS}) that, since $x_0\in\Sigma(\Omega)$, the set
$D^+ d_{\partial \Omega}(x_0)\cap B_1(0)$
is not empty and contains non-zero elements. Moreover, every 
$p\in D^+ d_{\partial \Omega}(x_0)\cap B_1(0)$ can 
be written as a convex combination of points
$p_0, \ldots p_k \in D^+ d_{\partial \Omega}(x_0)\cap \partial B_1(0)$, with $k \leq n$. 
We shall show, during the proof of Theorem~\ref{p:estid},
that the constant $K$ appearing in \eqref{f:estidist0}
can be chosen as the distance
between the origin and the boundary of the set
$\conv\{p_0-p,\ldots,p_k-p\}$,
whereas the vector $\zeta$ 
can be chosen in the set
\begin{equation}\label{f:zeta}
Z:= \left\{\frac{z}{|z|}\, :\
z\in\conv\{p_0-p,\ldots,p_k-p\},\ z\neq 0
\right\}\,.
\end{equation}
\end{remark}

\begin{remark}\label{r:reach}
If $\Omega$ is a set of positive reach, 
i.e., if there exists $R>0$ such that every point of
$\{z\in\R^n\setminus\overline{\Omega}:\ d_{\overline{\Omega}}(z) < R\}$
has a unique projection on $\overline{\Omega}$,
then estimate \eqref{f:estidist0}
can be improved in the following way:
\begin{equation}\label{f:estidistpr}
d_{\partial \Omega} (x) \leq d_{\partial \Omega} (x_0) + \pscal{p}{x-x_0}
- K\, |\pscal{\zeta}{x-x_0}| +\frac{1}{2 (d_{\partial \Omega}(x_0)+R)}
|x-x_0|^2\,
\end{equation}
(see Step 2 in the proof of Theorem \ref{p:estid}). 
In particular, if $\Omega$ is a convex set, then we get
\begin{equation}\label{f:estidistconv}
d_{\partial \Omega} (x) \leq d_{\partial \Omega} (x_0) + \pscal{p}{x-x_0}
- K\, |\pscal{\zeta}{x-x_0}| \,,
\qquad\forall x\in \Omega.
\end{equation}
Analogous improvements hold also for inequality \eqref{f:estidist}.
\end{remark}

\begin{remark}
By writing (\ref{f:estidist}) at $x = x_0 \pm h$ and summing up, one arrives at
\[
d _{\partial \Omega} (x_0 +  h) + d _{\partial \Omega} (x_0 -  h) \leq 2 d _{\partial \Omega} (x_0) - 2c \pscal{\zeta}{h} ^ 2 + \frac{1}{d (x_0)} |h| ^ 2\,. 
\]
This shows that Theorem \ref{p:estid} can be seen as a refinement at singular points of the concavity estimate for the distance function given in \cite[Prop.\ 2.2.2]{CaSi}. 
\end{remark}

The remaining of this section is devoted to prove Theorem \ref{p:estid}, and then to exemplify it in the cases of two simple geometries, such as a square and a triangle.

\bigskip
Let us recall some definitions and known results on the non-smooth analysis of the distance function. 
Let $d_S (x) := \min _{y \in S} |x-y|$ denote the distance from a nonempty closed set $S$.  
By regular (resp.\ singular) points of $d_S$ we mean points where $d_S$ is differentiable (resp.\ not differentiable).  
Moreover, we denote by 
$\pi_S (x)= \{ y \in S \, :\, d _ S(x) = |x-y| \} $ 
the projection of a point $x$ onto $S$. 


\begin{proposition}\label{CS} 
Let $S$ be a nonempty closed set in $\R ^n$
and let $x\not\in S$. Then the following hold:
\begin{itemize} 
\item[(i)] $x$ is a regular point of $d_S$ 
if and only if $\pi _ S(x)$ is a singleton, and in this case there holds
$$\nabla d_S (x) = \frac{ x - \pi _S(x) } {d_S(x)}\,.$$ 
\item[(ii)] If $x$ is a singular point of $d_S$, there holds  
\begin{eqnarray}
&D ^ * d _S(x) = \displaystyle{\Big \{ \frac{x-y}{|x-y|} \ :\ y \in \pi _ S (x) \Big \} } & \label{D*} \\
\noalign{\medskip}
&D ^ + d_ S  (x) = \conv D^* d_S (x) 
= \dfrac {x - \conv (\pi _S(x))}{d_S(x)}  \,.  & \label{D+}
\end{eqnarray}
\item[(iii)] The set ${\rm extr}\, D^+ d_S(x)$ 
of extremal points of $D^+ d_S(x)$ is given by
$${\rm extr}\, D^+ d_S(x) = D^* d_S(x) = D^+ d_S(x) \cap \partial B_1(0)\,.$$
\end{itemize}
\end{proposition}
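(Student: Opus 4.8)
The plan is to derive Proposition \ref{CS} from two elementary geometric lemmas combined with the standard theory of semiconcave functions, the key external input being the local semiconcavity of $d_S$ on the open set $\R^n\setminus S$ (see \cite[Prop.\ 2.2.2]{CaSi}). Recall that $d_S$ is $1$-Lipschitz and that, since $S$ is closed, for every $x\notin S$ the projection $\pi_S(x)$ is a nonempty compact set; moreover $\pi_S$ has closed graph, in the sense that if $x_h\to x$, $y_h\in\pi_S(x_h)$ and $y_h\to y$, then $y\in\pi_S(x)$ (because $y\in S$ and $|x-y|=\lim|x_h-y_h|=\lim d_S(x_h)=d_S(x)$). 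From the cited semiconcavity I may freely use the general facts that, at each $x\notin S$, the superdifferential $D^+d_S(x)$ is nonempty, convex and compact, that $d_S$ is differentiable at $x$ if and only if $D^+d_S(x)$ is a singleton, that $D^+d_S(x)=\conv D^*d_S(x)$, and that $\mathrm{extr}\,D^+d_S(x)\subseteq D^*d_S(x)$.

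Next I would record two lemmas. First, a touching lemma: for $y\in\pi_S(x)$ one has $d_S(\cdot)\le|\cdot-y|$ with equality at $x$, and $|\cdot-y|$ is smooth near $x$ since $x\ne y$; hence its gradient at $x$, namely $(x-y)/d_S(x)$, belongs to $D^+d_S(x)$. In particular, at any point $z$ where $d_S$ is differentiable, $D^+d_S(z)=\{\nabla d_S(z)\}$ forces $\nabla d_S(z)=(z-y)/d_S(z)$ for every $y\in\pi_S(z)$. Second, a segment lemma: if $y\in\pi_S(x)$ and $z$ lies on the open segment $(x,y)$, then $\pi_S(z)=\{y\}$; indeed $d_S(x)\le|x-z|+d_S(z)$ and $d_S(z)\le|z-y|$ together give $d_S(z)=|z-y|$, so $y\in\pi_S(z)$, while any other $y'\in\pi_S(z)$ would force, through the equality case of the triangle inequality, collinearity of $x,z,y'$ together with $|x-y'|=d_S(x)$, whence $y'=y$.

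Then I would compute $D^*d_S(x)$ for every $x\notin S$, which yields \eqref{D*} and proves part (i) along the way. For the inclusion $\subseteq$, given $p\in D^*d_S(x)$ I pick differentiability points $x_h\to x$ with $\nabla d_S(x_h)\to p$ and $y_h\in\pi_S(x_h)$; the touching lemma gives $\nabla d_S(x_h)=(x_h-y_h)/d_S(x_h)$, and a convergent subsequence $y_h\to y\in\pi_S(x)$ yields $p=(x-y)/d_S(x)$. Since $D^*d_S(x)\ne\emptyset$ (Rademacher plus the Lipschitz bound), this already gives the implication $\Leftarrow$ of (i): if $\pi_S(x)=\{y\}$ then $D^*d_S(x)$ is the nonempty singleton $\{(x-y)/d_S(x)\}$, so $D^+d_S(x)=\conv D^*d_S(x)$ is a singleton and $d_S$ is differentiable at $x$ with $\nabla d_S(x)=(x-y)/d_S(x)$. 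The reverse inclusion $\supseteq$ then follows from the segment lemma: for $y\in\pi_S(x)$ the points $z_s$ of the open segment $(x,y)$ tending to $x$ satisfy $\pi_S(z_s)=\{y\}$, hence are regular by the part of (i) just proved, with $\nabla d_S(z_s)=(x-y)/d_S(x)$, so $(x-y)/d_S(x)\in D^*d_S(x)$. With \eqref{D*} in hand, the implication $\Rightarrow$ of (i) is immediate: if $d_S$ is differentiable at $x$ then $D^+d_S(x)$, and hence $D^*d_S(x)=\{(x-y)/d_S(x):y\in\pi_S(x)\}$, is a singleton, and injectivity of $y\mapsto(x-y)/d_S(x)$ forces $\pi_S(x)$ to be a singleton.

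Finally, \eqref{D+} follows from $D^+d_S(x)=\conv D^*d_S(x)$ and the fact that the affine map $y\mapsto(x-y)/d_S(x)$ commutes with the convex hull, giving $D^+d_S(x)=(x-\conv\pi_S(x))/d_S(x)$. For part (iii), every element of $D^*d_S(x)$ has unit norm, so $D^*d_S(x)\subseteq\partial B_1(0)$ and $D^+d_S(x)=\conv D^*d_S(x)\subseteq\overline{B_1(0)}$; by strict convexity of the Euclidean ball a point of $D^+d_S(x)$ lying on $\partial B_1(0)$ is a convex combination of unit vectors of $D^*d_S(x)$ only if it coincides with each of them, whence $D^+d_S(x)\cap\partial B_1(0)=D^*d_S(x)$, and since unit vectors are extreme points of $\overline{B_1(0)}$ this set also equals $\mathrm{extr}\,D^+d_S(x)$ (the inclusion $\mathrm{extr}\,D^+d_S(x)\subseteq D^*d_S(x)$ being supplied by the semiconcave theory). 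The one genuinely nontrivial ingredient, and hence the main obstacle, is the identity $D^+d_S=\conv D^*d_S$: it is exactly here that local semiconcavity is indispensable, since mere uniqueness of the projection carries no quantitative modulus, and the lower bound needed for differentiability in the implication $\Leftarrow$ of (i) cannot be obtained by an elementary Lipschitz estimate alone.
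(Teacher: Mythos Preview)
Your argument is correct. The paper's own proof is far terser: for (i) and (ii) it simply cites \cite[Corollary~3.4.5]{CaSi}, and for (iii) it gives essentially your argument (${\rm extr}\,D^+\subseteq D^*\subseteq\partial B_1$ from \eqref{D*}--\eqref{D+}, and conversely any point of $D^+\cap\partial B_1$ is extreme since $D^+\subseteq\overline{B_1(0)}$). What you do differently is unpack that citation: you still lean on \cite{CaSi}, but only for the foundational semiconcave facts (local semiconcavity of $d_S$ on $\R^n\setminus S$, the identity $D^+=\conv D^*$, and differentiability $\Leftrightarrow$ $D^+$ a singleton), and you then derive \eqref{D*} yourself via the touching and segment lemmas and the closed-graph property of $\pi_S$. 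This buys transparency---the geometric mechanism behind (i)--(ii) is made explicit---at the cost of length; the paper's version trades that for brevity by invoking the ready-made corollary.
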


\proof 
The properties (i) and (ii) follow from Corollary 3.4.5 in \cite{CaSi}.
To prove (iii) notice firstly that, by \eqref{D*}--\eqref{D+},
it holds ${\rm extr}\, D^+ d_S(x) \subseteq D^* d_S(x) \subset \partial B_1(0)$.
On the other hand, since $D^+ d_S(x) \subseteq \overline{B}_1(0)$,
if $p\in D^+d_S(x) \cap \partial B_1(0)$ then necessarily
$p\in {\rm extr} \, D^+d_S (x)$.
\qed 

\begin{remark}\label{l:hull}
As a consequence of Proposition \ref{CS} (ii)-(iii),
we have that,
if $\Omega$ is an open subset of $\R^n$
and $x\in \high (\Omega)$,
then 
\[
x \in \conv \big ( \partial \Omega \cap \partial B_{\rho_\Omega}(x)\big )\,.
\]
Namely, since $x\in\high(\Omega)$, then $0\in D^+ d_{\partial\Omega}(x)$,
so that there exist $p_0, \ldots, p_k \in D^* d_{\partial\Omega}(x)$ and
numbers $\lambda_0, \ldots, \lambda_k \in [0,1]$ with $\sum_{i=0}^k \lambda_i = 1$
such that $\sum_{i=0}^k \lambda_i p_i = 0$.
On the other hand, for every $i$ there exists 
$y_i\in\pi_{\partial\Omega}(x)$ such that $p_i = (x-y_i) / \rho_{\Omega}$, 
so that
\[
0 = \rho_{\Omega} \sum_{i=0}^k \lambda_i p_i = \sum_{i=0}^k \lambda_i (x - y_i)
\]
from which we conclude that $x = \sum_{i=0}^k \lambda_i y_i$.
\end{remark}



\bigskip

We are now in a position to give:

\bigskip
\textbf{Proof of Theorem \ref{p:estid}.}
Set for brevity $d: = d _{\partial \Omega}$. 
Since $p\in D^+ d(x_0)\setminus D^* d(x_0)$
and $D^+d(x_0) = \conv D^* d(x_0)$,
there exist $p_0, \ldots, p_k \in D^*d(x_0)$
(with $1\leq k\leq n$)
and numbers 
$\lambda_0, \ldots, \lambda_k\in (0,1)$ with
$\sum _{i=0} ^ k \lambda _i = 1$ such that
$p = \sum_{i=0}^k \lambda_i p_i$.

We divide the remaining of the proof in three steps.

\medskip
\textsl{Step 1. 
The following inequality holds:}
\begin{equation}\label{f:estid12}
d(x) \leq d(x_0) + \min_{i=0,\ldots,k} \pscal{p_i}{x-x_0}
+\frac{1}{2 d(x_0)} |x-x_0|^2\,,
\qquad \forall x\in\Omega.
\end{equation}

\smallskip
Since the points $y_i := x_0 - d(x_0) p_i$, $i=0, \ldots, k$,
belong to $\pi_{\partial\Omega}(x_0)$,
and recalling that $\sqrt{1+t}\leq 1+t/2$ for every $t\geq -1$,
we have the estimate
\[
\begin{split}
d(x) & \leq |x-y_i| 
= |x-x_0 + d(x_0) p_i|
= \left[
|x-x_0|^2 + 2d(x_0) \pscal{p_i}{x-x_0} + d(x_0)^2
\right]^{1/2}
\\ & =
d(x_0) \left[1+\frac{2}{d(x_0)}\pscal{p_i}{x-x_0}
+\frac{1}{d(x_0)^2} |x-x_0|^2\right]^{1/2}
\\ & \leq
d(x_0) + \pscal{p_i}{x-x_0} 
+\frac{1}{2 d(x_0)} |x-x_0|^2\,.
\end{split}
\]
Since this inequality holds for every $i$,
\eqref{f:estid12} follows.
We observe that, if $\Omega$ is a set of positive reach
(see Remark \ref{r:reach}), then we can obtain the improved estimate
\begin{equation}\label{modif}
d(x) \leq
d(x_0) + \min _{i = 0, \dots, k }  \pscal{p_i}{x-x_0} 
+\frac{1}{2 (d(x_0)+R)} |x-x_0|^2\, ,
\end{equation}
by using, in place of $y _i$,  the points $\widetilde{y}_i := x_0 - (d(x_0) + R) p_i$
and the fact that $B_R(\widetilde{y}_i)\cap\Omega = \emptyset$.
{}If in Steps 2 and 3 below we use the improved estimate (\ref{modif}) in place of (\ref{f:estid12}), 
we can then obtain 
\eqref{f:estidistpr} instead of \eqref{f:estidist0}.

\medskip
\textsl{Step 2. 
Let $K$ denote the distance between the origin and the boundary
of $\conv\{p_0-p,\ldots,p_k-p\}$.
Then for every unit vector $\zeta$ in the set $Z$
defined in \eqref{f:zeta}, one has
\begin{equation}\label{f:estimin22}
\min_{i=0,\ldots,k} \pscal{p_i-p}{x}\leq
-K\, |\pscal{\zeta}{x}|\,,
\qquad \forall x\in\R^n.
\end{equation}
}

\smallskip
Since $\sum_{i} \lambda_i (p_i - p) = 0$
we have that the set
\[
F := {\rm span}\{p_0-p, p_1-p, \ldots, p_k-p\}
\]
is a subspace of $\R^n$ of dimension $k$.
Let
\[
Q := \conv\{p-p_0, p-p_1,\ldots, p-p_k\}\,;
\]
since $0$ belongs to the relative interior of the polytope $Q$,
and since $K$ is the distance between $0$ and
the boundary of $Q$, we clearly have $K > 0$
and $B := \overline{B}_K(0)\cap F \subseteq Q$.
Hence
\[
h_Q(x) := \max\{\pscal{q}{x}:\ q\in Q\}
\geq
\max\{\pscal{b}{x}:\ b\in B\} =: h_B(x),
\qquad \forall x\in\R^n.
\]
On the other hand, we have that
\[
h_Q(x) = \max_{i=0,\ldots k} \pscal{p-p_i}{x}
= - \min_{i=0,\ldots k} \pscal{p_i-p}{x}
\]
whereas, if $\zeta$ is any unit vector in the set $Z$ defined in \eqref{f:zeta},
then $\pm K\zeta\in B$, so that
\[
h_B(x) = \max\{\pscal{b}{x}:\ b\in B\}
\geq K |\pscal{\zeta}{x}|\,.
\]
Now \eqref{f:estimin22} 
easily follows.

\medskip
\textsl{Step 3. Completion of the proof.}

\smallskip
The estimate \eqref{f:estidist0} is a direct consequence
of \eqref{f:estid12} and \eqref{f:estimin22}.
In order to prove \eqref{f:estidist}
it is enough to observe that,
given $c > 0$,
the inequality
$K \, |t| \geq c\, t^2$
holds for every  $|t| < K/c$.
\qed

\begin{figure}[ht]
\begin{minipage}{0.45\linewidth}
\centering
\def\svgwidth{6cm}   
{
  \setlength{\unitlength}{\svgwidth}
  \begin{picture}(1,1.04308078)%
    \put(0,0){\includegraphics[width=\unitlength]{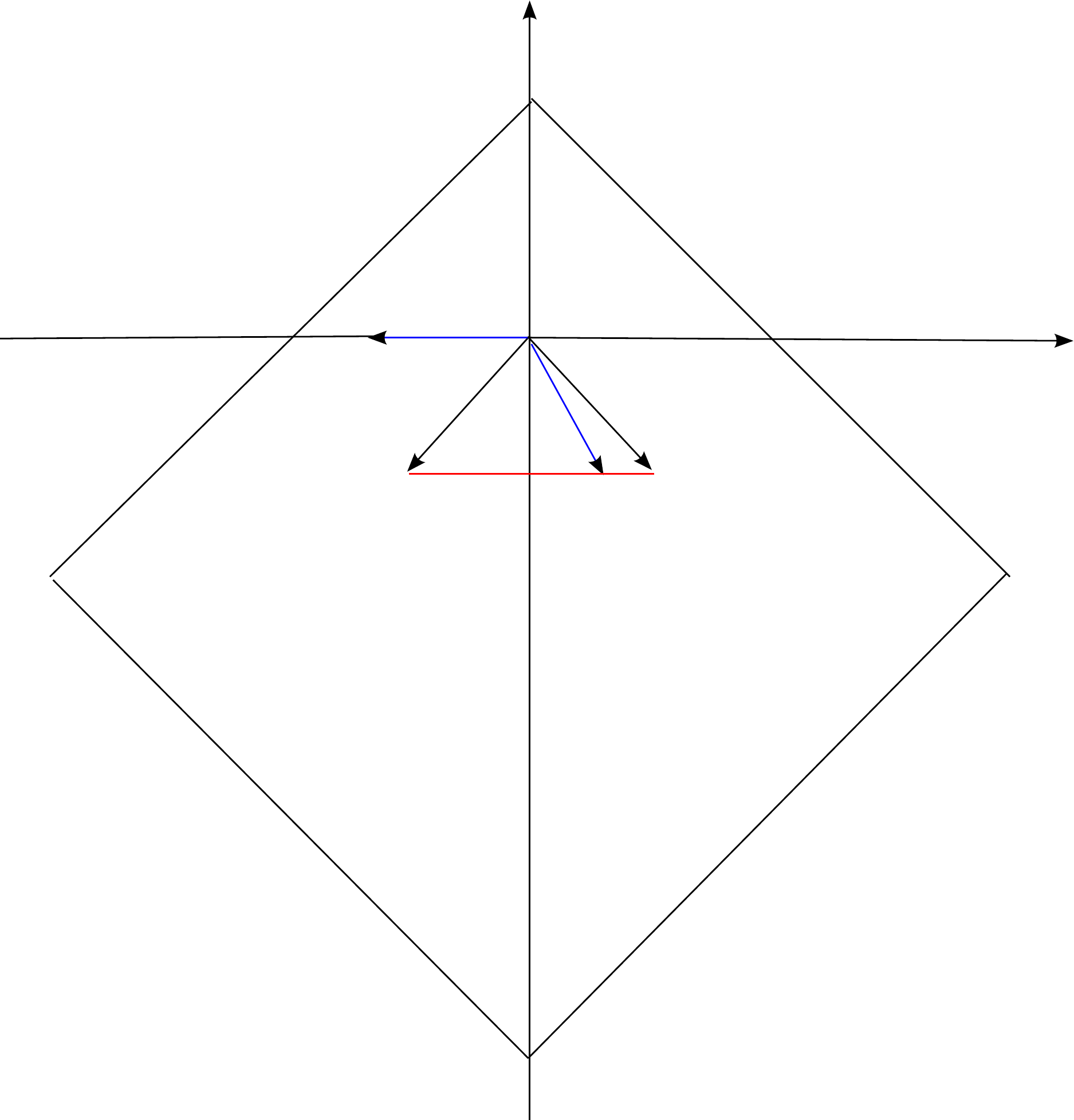}}%
    \put(0.35688182,0.5689513){\color[rgb]{0,0,0}\makebox(0,0)[lb]{\smash{$p_0$}}}%
    \put(0.59148569,0.56648424){\color[rgb]{0,0,0}\makebox(0,0)[lb]{\smash{$p_1$}}}%
  \end{picture}%
}%
\end{minipage}
\begin{minipage}{0.45\linewidth}
\centering
\def\svgwidth{6cm}   
{
  \setlength{\unitlength}{\svgwidth}
  \begin{picture}(1,0.77412266)%
    \put(0,0){\includegraphics[width=\unitlength]{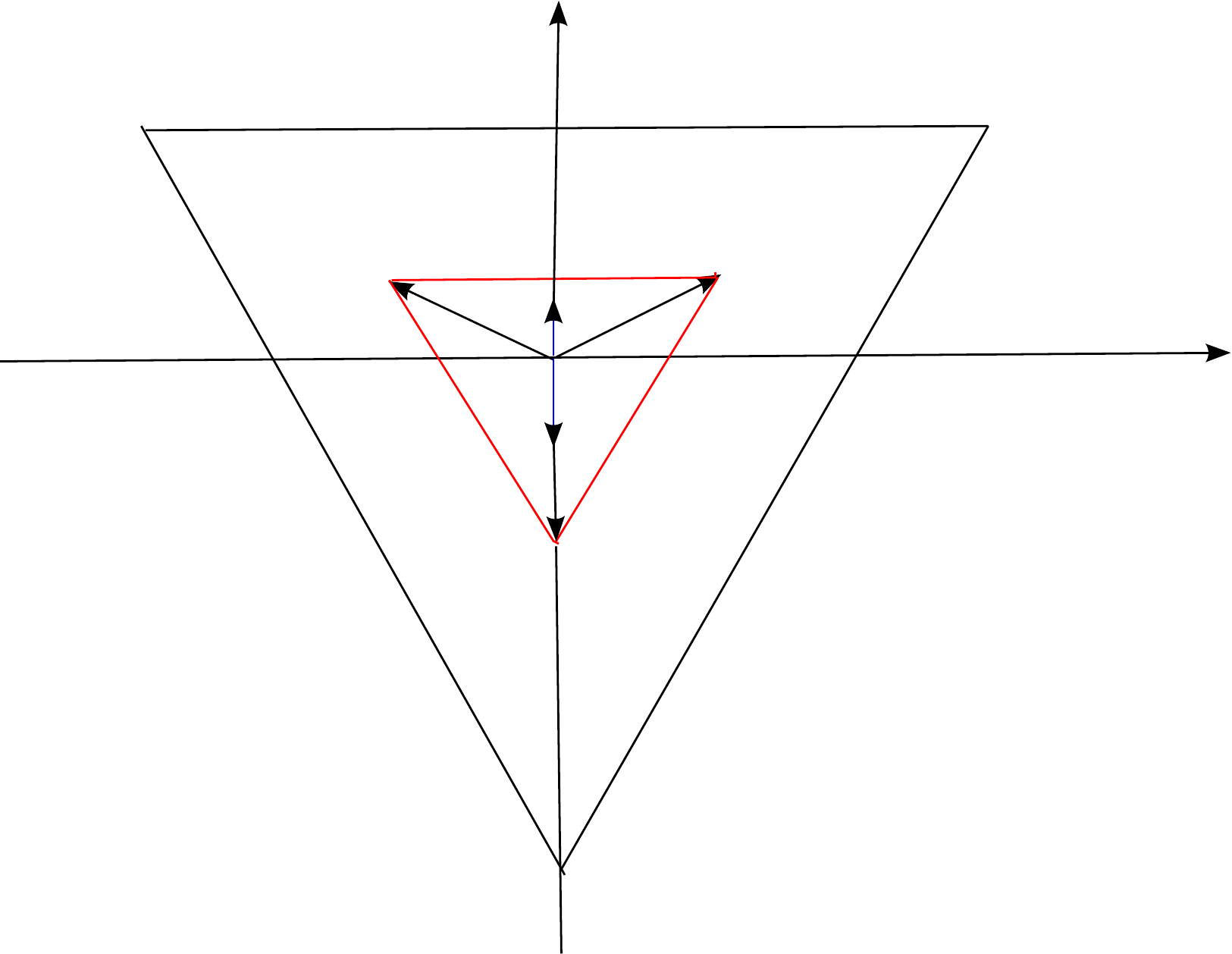}}%
    \put(0.45350966,0.3104516){\color[rgb]{0,0,0}\makebox(0,0)[lb]{\smash{$p_0$}}}%
    \put(0.57213118,0.5728885){\color[rgb]{0,0,0}\makebox(0,0)[lb]{\smash{$p_1$}}}%
    \put(0.2824008,0.56658998){\color[rgb]{0,0,0}\makebox(0,0)[lb]{\smash{$p_2$}}}%
  \end{picture}%
}%
\end{minipage}
\caption{Application of Theorem \ref{p:estid} to a square and a triangle}
\label{Ex:quadrato}
\end{figure}

\begin{example} 
Let $Q$ be the square with sides of length $4$, having two vertices at 
$(0, \sqrt 2)$ and $(0 , - 3 \sqrt 2)$.  
Let us show how the construction of Theorem \ref{p:estid} applies at the point 
$x_0 = (0, 0) \in \Sigma (Q)$, see Figure \ref{Ex:quadrato} left.  
Adopting the same notation as in the above proof, there holds 
\[
D ^* d_{\partial Q} (x_0) = \{ p _0, p _1\} \, , \hbox{ with } p_0 = \big ( - \frac{1}{\sqrt 2}, - \frac{1}{\sqrt 2} \big ) \hbox{  and  }
p_1 = \big  (  \frac{1}{\sqrt 2}, - \frac{1}{\sqrt 2} \big )\, .
\] 
Choosing 
\[
p= \frac{1}{2} \big ( p _0 +  p _1 \big )= 
\left(0  ,   - \frac{1}{\sqrt 2} \right) \in D ^+ d _{\partial Q} (x_0)
\]
and 
\[
\zeta = \frac{p_0 - p}{|p_0 - p|} = (-1, 0),
\] 
we have that $K = \min\{|p-p_0|,\, |p-p_1|\} = 1/\sqrt{2}$,
and the estimate \eqref{f:estidistconv} (for convex $\Omega$)
takes the form
\[
d_{\partial Q}(x) \leq 1 - \frac{1}{\sqrt{2}}\, x_2 - \frac{1}{\sqrt{2}}\, |x_1|.
\]
A direct computation of $d_{\partial Q}$ shows that, indeed,
the inequality above turns out to be an equality on the
upper half--square.
\end{example}

\begin{example}
Let $T$ be triangle with sides of length $2$, 
having vertices at 
$\big (1, \frac{\sqrt 3}{3} \big )$, $\big (-1, \frac{\sqrt 3}{3} \big )$
and $\big(0, -2\frac{\sqrt{3}}{3}\big)$. 
Let us show how the construction of Theorem \ref{p:estid} applies at the point $x_0 = (0, 0)$; notice that in this case there holds 
$x _0 \in \high (T)$, as $d_{\partial T} (x_0) = \frac{\sqrt 3}{3}$,  
see Figure \ref{Ex:quadrato} right. There holds 
\[
D ^* d_{\partial T} (x_0) = \{ p _0, p _1, p_2\} \, , \hbox{ with } p_0 = (0, -1), p _1 = \left(  \frac{\sqrt 3}{2},  \frac{1}{2} \right) \hbox{  and  }
p_2 = \left( - \frac{\sqrt 3}{ 2},  \frac{1}{ 2}  \right)\, .
\]
Choosing
\[
p = (0,0) = \frac{1}{3} \big ( p _0 + p _1 + p _2 \big ) \in D ^+ d _{\partial T} (x_0)\,,
\qquad
\zeta = \frac{p_0 - p}{|p_0 - p|} = ( 0, -1)\, ,
\]
the resulting estimate (for convex sets) reads 
\[
d _{\partial T} (x) \leq 
\frac{\sqrt{3}}{3} - \frac{1}{2} \, |x_2|\,.
\]  
This estimate can be compared with the exact value of 
$d _{\partial T}$:
\[
d _{\partial T}(x) = \frac{\sqrt{3}}{3} + 
\min\left\{-\frac{\sqrt{3}}{2}\, |x_1| + \frac{x_2}{2}\,,
-x_2\right\}\,.
\]
\end{example}


\section{Necessity of the condition $\Cut (\Omega) = \high (\Omega)$}\label{secb}

In this section we establish the converse statement of Theorem \ref{t:a}, which reads: 

\begin{theorem}
\label{t:b}
Let $\Omega \subset \R^n$ be an open bounded connected domain. 
Assume there exists a web viscosity solution $u$ to the Dirichlet boundary value problem $(\ref{f:dirich})$. 
Then it holds $u = \phi _\Omega$ and 
$\Cut (\Omega) = \high (\Omega)$.
\end{theorem}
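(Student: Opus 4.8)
The plan is to exploit the uniqueness part of Theorem~\ref{t:a}, or rather its engine, Theorem~5 of \cite{LuWang}: given any web viscosity solution $u$ to \eqref{f:dirich}, comparison forces $u$ to coincide with the unique viscosity solution, so one must first identify that unique solution as $\phi_\Omega$ \emph{without} assuming $\Cut(\Omega)=\high(\Omega)$. Since $u=g_u(d_{\partial\Omega})$ for some continuous profile $g_u$ (web function), the first step is to analyze $u$ along a fixed optimal segment from $\partial\Omega$ to a point of $\high(\Omega)$: on the open portion of such a segment lying in $\Omega\setminus\Cut(\Omega)$, the distance function is smooth with unit gradient, so plugging $C^2$ test functions of the form $h(d_{\partial\Omega})$ into the sub-/super-solution inequalities shows that $g_u$ must satisfy the ODE $-(g_u')^2 g_u'' = 1$ there, with $g_u(0)=0$. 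This pins down $g_u$ up to the choice of $g_u'(0)$; equivalently $g_u(t) = c_0[b^{4/3} - (b-t)^{4/3}]$ for some parameter $b$, and the value of $b$ is determined by requiring that the formula make sense (reality, monotonicity) on the full range $[0,\rho_\Omega]$ of $d_{\partial\Omega}$, forcing $b=\rho_\Omega$ and hence $g_u=g$, i.e. $u=\phi_\Omega$.

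Having established $u=\phi_\Omega$, the heart of the matter is to deduce $\Cut(\Omega)=\high(\Omega)$. One inclusion, $\high(\Omega)\subseteq\Cut(\Omega)$, is elementary ($d_{\partial\Omega}$ cannot be differentiable at an interior maximum with nonzero gradient, and its gradient has modulus $1$ wherever it exists). For the reverse inclusion $\Cut(\Omega)\subseteq\high(\Omega)$, I argue by contradiction: suppose there is a singular point $x_0\in\Sigma(\Omega)$ with $d_{\partial\Omega}(x_0)=:\delta<\rho_\Omega$. The idea is to build a $C^2$ function $\varphi$ touching $\phi_\Omega=g\circ d_{\partial\Omega}$ from below at $x_0$ and violating the super-solution inequality $-\Delta_\infty\varphi(x_0)\ge 1$, which would contradict $\phi_\Omega$ being a viscosity solution. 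This is exactly where Theorem~\ref{p:estid} enters: choosing $p\in D^+d_{\partial\Omega}(x_0)\cap B_1(0)$ and the associated unit vector $\zeta$ and constant $K$, estimate \eqref{f:estidist} gives, on $B_\delta(x_0)\cap\Omega$,
\[
d_{\partial\Omega}(x)\le \delta + \pscal{p}{x-x_0} - c\pscal{\zeta}{x-x_0}^2 + \frac{1}{2\delta}|x-x_0|^2 =: \psi(x),
\]
a smooth upper bound for $d_{\partial\Omega}$ agreeing with it at $x_0$. Composing with the increasing function $g$, the function $\varphi := g\circ\psi$ is $C^2$ near $x_0$ (since $\psi(x_0)=\delta<\rho_\Omega$ keeps us away from the singularity of $g''$ at $\rho_\Omega$), touches $\phi_\Omega$ from above at $x_0$; to get a function touching from below one instead works with a suitable lower paraboloid, or more precisely one uses the strict concavity correction to flip the role — the key point being that the quadratic term $-c\pscal{\zeta}{x-x_0}^2$ with $c$ large makes the infinity-Laplacian of the test function, computed in the direction $p$ (chosen via the last sentence of Theorem~\ref{p:estid} so that $\pscal{\zeta}{p}\ne 0$), as negative as one wishes, overwhelming the $+1$ on the right-hand side.

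\textbf{Main obstacle.} The delicate point is the correct bookkeeping of which side the test function touches from and getting the sign of $-\Delta_\infty$ right. The equation $-\Delta_\infty w=1$ means $\Delta_\infty w<0$ for solutions; a genuine supersolution test at a singular point requires a $C^2$ function touching from below whose infinity-Laplacian in its gradient direction is $<-1$ is \emph{not} what we want — rather we must produce one with $-\Delta_\infty\varphi(x_0)<1$ strictly, contradicting \eqref{f:supersol}. Concretely this forces us to add a large negative-definite quadratic term in the transverse direction $\zeta$, keep a positive first-order term in a direction $e$ with $\pscal{e}{p}$, $\pscal{e}{\zeta}$ both controlled, and check that after composing with $g$ (whose derivatives $g'(\delta)>0$, $g''(\delta)<0$ are finite and explicit since $\delta<\rho_\Omega$) the resulting $\Delta_\infty\varphi(x_0) = (\varphi_e)^2\varphi_{ee} + \text{(cross terms)}$ is driven below $-1$ by choosing $c=\delta/K\cdot(\text{something})$ large while simultaneously guaranteeing $\varphi$ stays above $\phi_\Omega$ on a full neighborhood — for which the $\frac{1}{2\delta}|x-x_0|^2$ term in \eqref{f:estidist} must be dominated by $-c\pscal{\zeta}{x-x_0}^2$ in the relevant directions, i.e. one only controls things in a thin cone and must argue that this suffices for the local touching condition. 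Threading this estimate — large $c$ helps the differential inequality but shrinks the neighborhood $\delta=K/c$ of validity — is the real technical crux, and is presumably where the bulk of Section~\ref{secb} is spent.
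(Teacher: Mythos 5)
Your outline assembles the right ingredients (reduction to a one-dimensional profile, Theorem \ref{p:estid}, the choice $\pscal{\zeta}{p}\neq 0$ and a large constant $c$), but it has two genuine gaps, and they sit exactly at the hard points of the argument. First, the identification of the profile. Even granting the ODE $-(f')^2f''=1$ with $f(0)=0$ along optimal rays (which you cannot obtain by ``plugging $h(d_{\partial\Omega})$'' as a test function: no regularity of $\Omega$ is assumed, so $d_{\partial\Omega}$ is not $C^2$ near regular points, and the paper instead transfers the equation to the radial function $v(z)=f(\rho_\Omega-|z|)$ via Proposition \ref{p:caratt}, using monotonicity of $f$ and the smooth cone functions $|x-q_0|$, $|x-p_0|$, and then invokes Lu--Wang uniqueness in the punctured ball), the conclusion you draw is false: the family $g_b(t)=c_0[b^{4/3}-(b-t)^{4/3}]$ is real, increasing and solves the ODE on all of $[0,\rho_\Omega]$ for \emph{every} $b\ge\rho_\Omega$, so ``reality and monotonicity'' only force $b\ge\rho_\Omega$, not $b=\rho_\Omega$. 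Pinning down $b=\rho_\Omega$ is precisely the core of the paper's proof: assuming $g_r'(\rho_\Omega)>0$, one takes $x_0\in \high(\Omega)$ (which is automatically a singular point), uses the concavity of $g_r$ to linearize, $u(x)\le u(x_0)+g_r'(\rho_\Omega)\,(d_{\partial\Omega}(x)-\rho_\Omega)$, inserts \eqref{f:estidist}, and obtains a $C^2$ function touching $u$ \emph{from above} with $\Delta_\infty\varphi(x_0)=g_r'(\rho_\Omega)^3\bigl(-2c\pscal{\zeta}{p}^2+|p|^2/\rho_\Omega\bigr)<-1$ for $c$ large, contradicting the subsolution inequality \eqref{f:subsol}. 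Skipping this step removes the heart of the theorem.

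Second, in your argument for $\Cut(\Omega)=\high(\Omega)$ the bookkeeping of sides and inequalities is wrong in a way that cannot be repaired as stated. At a singular point $x_0$ with $g'(d_{\partial\Omega}(x_0))>0$, the function $u=g\circ d_{\partial\Omega}$ has a concave kink, so its subdifferential is empty and \emph{no} $C^1$ function touches it from below: the plan ``touch from below and violate the supersolution inequality'' is unworkable. Moreover, producing a test function with $\Delta_\infty\varphi(x_0)$ very negative does not contradict \eqref{f:supersol} (which asks $-\Delta_\infty\varphi\ge 1$, i.e.\ $\Delta_\infty\varphi\le -1$); it contradicts \eqref{f:subsol}, and for that the test function must touch from \emph{above} -- which is exactly what Theorem \ref{p:estid} plus the concavity linearization provides (the paper uses $\varphi=u(x_0)+g'(d_0)\psi$ rather than $g\circ\psi$, which keeps the computation clean). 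Finally, the issue you flag as the ``real technical crux'' -- that $c$ large shrinks the neighborhood $\delta=K/c$ -- is harmless: viscosity touching is a purely local condition, so any nonempty ball suffices; the genuine technical content lies in Theorem \ref{p:estid} itself and in using it on the correct side of the equation.
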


\medskip
By combining Theorem \ref{t:b} with the geometrical results we proved in  \cite[Thm.\ 6 and Thm.\ 12]{CFb}, one readily gets the following two corollaries. 

The first one  allows to view the shape of domains where problem (\ref{f:dirich}) admits a web viscosity solution:

\begin{corollary}\label{corgeo1} 

Under the same hypotheses of Theorem \ref{t:b}, assume in addition that $n=2$. 
Then the set  $S:=\Cut (\Omega) = \high (\Omega)$ 
is either a singleton or a $1$-dimensional manifold of class $C ^ {1,1}$, and $\Omega$ is the tubular neighborhood
$$\Omega=  S_{\rho_\Omega} := \{ x \in \R ^2 \ :\ d_S (x) < \rho_\Omega \}\,.$$
\end{corollary}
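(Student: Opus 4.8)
The plan is to derive Corollary~\ref{corgeo1} purely as a combination of Theorem~\ref{t:b} with the two-dimensional geometric results of \cite{CFb}, so essentially no new analysis is needed. First I would invoke Theorem~\ref{t:b}: since by hypothesis problem \eqref{f:dirich} admits a web viscosity solution, we immediately obtain $\Cut(\Omega)=\high(\Omega)=:S$. The whole task then reduces to translating the structural consequences of this identity, in the planar case, into a statement about $S$ and $\Omega$.

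Next I would recall the relevant facts about $S$. Since $S=\high(\Omega)$, every point of $S$ lies at distance exactly $\rho_\Omega$ from $\partial\Omega$; since $S=\Cut(\Omega)$, $S$ is also the closure of the singular set of $d_{\partial\Omega}$. The key input from \cite[Thm.~6]{CFb} is that, in dimension $n=2$, the coincidence $\Cut(\Omega)=\high(\Omega)$ forces $S$ to be either a single point or a one-dimensional manifold of class $C^{1,1}$ (a curve, possibly closed). I would cite this verbatim and use it to dispose of the description of $S$ itself. For the identification $\Omega=S_{\rho_\Omega}$ one argues with a double inclusion. The inclusion $\Omega\subseteq S_{\rho_\Omega}$ is elementary: every $x\in\Omega$ lies on a segment from a point of $\pi_{\partial\Omega}(x)$ through $x$ to a point of $\Cut(\Omega)=S$ at distance $\le\rho_\Omega$ from $x$ (using that along such a segment $d_{\partial\Omega}$ increases up to its maximal value $\rho_\Omega$ attained on $S$), hence $d_S(x)<\rho_\Omega$. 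For the reverse inclusion $S_{\rho_\Omega}\subseteq\overline\Omega$, I would use that each point of $S$ has distance $\rho_\Omega$ from $\partial\Omega$, so $B_{\rho_\Omega}(s)\subseteq\overline\Omega$ for $s\in S$; taking the union over $s\in S$ gives $S_{\rho_\Omega}\subseteq\overline\Omega$, and since $S_{\rho_\Omega}$ is open and $\Omega$ is connected with $\partial S_{\rho_\Omega}$ disjoint from $\Omega$ (points of $\partial S_{\rho_\Omega}$ are at distance $\rho_\Omega$ from $S$, hence not in the open set covered by the balls), one upgrades this to $S_{\rho_\Omega}\subseteq\Omega$. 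Combining the two inclusions yields $\Omega=S_{\rho_\Omega}$ as claimed.

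The main obstacle, if any, is being careful that the identity $\Omega=S_{\rho_\Omega}$ is not merely set-theoretic near the boundary but genuinely an equality of open sets; this is exactly where one must use connectedness of $\Omega$ and the fact that $S$ is precisely the high ridge (so the radius $\rho_\Omega$ is sharp and no point of $S_{\rho_\Omega}$ can escape $\overline\Omega$). All the heavy lifting—both the viscosity argument giving $\Cut(\Omega)=\high(\Omega)$ and the $C^{1,1}$-regularity of the resulting ridge—has already been done in Theorem~\ref{t:b} and in \cite[Thm.~6]{CFb} respectively, so the proof is a short assembly of these ingredients.
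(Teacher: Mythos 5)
Your proposal is correct and is essentially the paper's own proof: the corollary is obtained exactly by combining Theorem \ref{t:b} (which yields $\Cut(\Omega)=\high(\Omega)=:S$) with the planar characterization in \cite[Thm.~6]{CFb}, from which the paper also reads off the identity $\Omega=S_{\rho_\Omega}$ rather than re-deriving it. Your supplementary double-inclusion argument for $\Omega=S_{\rho_\Omega}$ is sound, though the reverse inclusion is simpler than your connectedness upgrade suggests: since each $s\in S$ has $d_{\partial\Omega}(s)=\rho_\Omega$, the open ball $B_{\rho_\Omega}(s)$ is connected, misses $\partial\Omega$ and contains $s\in\Omega$, hence lies in $\Omega$, so $S_{\rho_\Omega}=\bigcup_{s\in S}B_{\rho_\Omega}(s)\subseteq\Omega$ directly.
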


The second one allow to establish symmetry under suitable assumptions on the space dimension and on the topology of $\Omega$:

\begin{corollary}\label{corgeo2}
Under the same hypotheses of Theorem  \ref{t:b},  assume in addition that $\Omega$ is of class $C ^2$. If either 
$n = 2$ and $\Omega$ is simply connected, or $n$ is arbitrary and 
$\Omega$ is convex, then $\Omega$ is a ball. \end{corollary}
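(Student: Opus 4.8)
The plan is to deduce this rigidity statement by feeding the output of Theorem~\ref{t:b} into the purely geometric classification results established in \cite{CFb}. The point is that Theorem~\ref{t:b} performs all the PDE-analytic work: the hypothesis on the existence of a web viscosity solution to \eqref{f:dirich} is converted, with no residual information about $u$ being required, into the single geometric condition $\Cut(\Omega)=\high(\Omega)$. Once this identity is in hand, the solution $u=\phi_\Omega$ disappears entirely from the discussion, and the question of whether $\Omega$ is a ball becomes a statement exclusively about the distance function $d_{\partial\Omega}$ and the shape of $\partial\Omega$.

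Concretely, I would proceed as follows. First, apply Theorem~\ref{t:b} to the web solution $u$ furnished by hypothesis; this yields $\Cut(\Omega)=\high(\Omega)$. At this stage we have at our disposal exactly the two conditions
\[
\Cut(\Omega)=\high(\Omega) \qquad\text{and}\qquad \partial\Omega\in C^2,
\]
which are precisely the hypotheses of the geometric rigidity theorems of \cite{CFb}. It then remains only to split into the two cases of the statement and quote the corresponding result: for $\Omega$ convex with arbitrary $n$, and for $\Omega$ simply connected with $n=2$, the results \cite[Thm.~6 and Thm.~12]{CFb} each assert that the pair of conditions above forces $\Omega$ to be a ball. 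In either case the conclusion is reached.

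Since the corollary is obtained by combining a theorem proved here with results proved elsewhere, there is no serious internal obstacle: the entire geometric difficulty is carried by \cite{CFb}. The only point genuinely requiring care is to check that the hypotheses match exactly, and in particular to appreciate why the regularity assumption $\partial\Omega\in C^2$ is indispensable. Indeed, the identity $\Cut(\Omega)=\high(\Omega)$ alone does \emph{not} force radial symmetry: by Corollary~\ref{corgeo1}, in dimension $n=2$ a domain satisfying it is merely a tubular neighborhood $S_{\rho_\Omega}$ around a $C^{1,1}$ manifold $S$, which is generally not a disk. It is exactly the extra hypothesis $\partial\Omega\in C^2$, together with either convexity or simple connectedness, that upgrades this flexible geometric picture to genuine spherical symmetry, by forcing the set $S=\Cut(\Omega)=\high(\Omega)$ to degenerate to a single point (the center of the ball). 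I would also emphasize that the two cases cannot be merged into one, since the rigidity mechanisms behind \cite[Thm.~6 and Thm.~12]{CFb} are of different nature — one driven by convexity in arbitrary dimension, the other by planar topology — which is precisely why the corollary is phrased as a dichotomy.
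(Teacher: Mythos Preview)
Your proposal is correct and matches the paper's own argument essentially verbatim: the paper states that Corollary~\ref{corgeo2} is obtained ``by combining Theorem~\ref{t:b} with the geometrical results we proved in \cite[Thm.~6 and Thm.~12]{CFb}'', which is exactly the route you describe. Your additional remarks on why the $C^2$ assumption is indispensable and why the two cases are treated separately are accurate and helpful, but go beyond what the paper itself provides.
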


\begin{remark}  If  the conclusion $\Cut (\Omega) = \high (\Omega)$ of Theorem \ref{t:b} should remain valid when one assumes merely the  existence of a solution to problem (\ref{overdet}), in view of Corollaries \ref{corgeo1} and \ref{corgeo2} a crucial difference would emerge between the overdetermined boundary value problems (\ref{pLap}) for the classical Laplacian and (\ref{overdet}) for the $\infty$-Laplacian: while for the former symmetry holds under very mild conditions on $\partial \Omega$ \cite{Praj, Vog}, 
for the latter symmetry would be still true if 
$\Omega$ is simply connected and
$\partial \Omega \in C^2$, but false below such threshold of regularity. 
\end{remark}

\medskip
Let us give some preliminary results needed for the proof of Theorem \ref{t:b}. 

Below,  $D^{\pm} f (\rho- |z_0|)$ denote  the Fr\'echet super and sub-differentials of $f$ at $\rho - | z_0|$. 

\medskip
\begin{lemma}
\label{l:subdif}
Let
$f\colon [0,\rho]\to\R$ be a continuous function and, for $z\in B_\rho(0)$,  set
$v(z) := f(\rho-|z|)$. Let $z_0 \in B _\rho (0) \setminus \{ 0 \}$ be fixed and assume that $\psi$ is a $C ^1$ function. 
\begin{itemize}
\item[(a)] If $\psi$ touches $v$ from above at   $z_0$, 
then $D^+ f(\rho- |z_0|)\neq \emptyset$ and
\[
\nabla\psi(z_0) = \alpha\, \frac{z_0}{|z_0|}\,,
\quad \hbox{ with } \ \alpha\in - D^+ f(\rho- |z_0|).
\]

\item[(b)] If $\psi$ touches $v$ from below at   $z_0$, 
 then $D^- f(\rho - |z_0|)\neq \emptyset$ and
\[
\nabla\psi(z_0) = \alpha\, \frac{z_0}{|z_0|}\,,
\quad \hbox{ with } \ \alpha\in - D^- f(\rho- |z_0|).
\]
\end{itemize}

\end{lemma}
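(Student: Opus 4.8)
\textbf{Proof plan for Lemma \ref{l:subdif}.}
The statement is essentially a chain-rule computation for radial functions combined with the definition of the one-dimensional Fr\'echet sub/super-differential. I will prove part (a); part (b) follows by replacing $v$ with $-v$ and $f$ with $-f$, which swaps super- and sub-differentials. The starting observation is that $v(z)=f(\rho-|z|)$ is differentiable wherever $f$ is differentiable at $\rho-|z|$, with $\nabla v(z) = -f'(\rho-|z|)\,z/|z|$; away from the origin, the map $z\mapsto \rho-|z|$ is smooth, so all the non-smoothness of $v$ comes from $f$. Since $\psi\in C^1$ touches $v$ from above at $z_0\neq 0$, the function $\psi-v$ has a local minimum at $z_0$, hence $\psi(z)\ge v(z)+\psi(z_0)-v(z_0)$ for $z$ near $z_0$.

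First I would pin down the \emph{direction} of $\nabla\psi(z_0)$. Write a point near $z_0$ as $z = z_0 + tw$ with $w\perp z_0$, $|t|$ small. Then $|z| = |z_0| + O(t^2)$, so $v(z_0+tw) = f(\rho - |z_0| + O(t^2)) = v(z_0) + O(t^2)$ because $f$ need only be continuous — more precisely, since $f$ admits a nonempty super-differential at $\rho-|z_0|$ (which I will establish in the second step, but which is not needed here), even the crude bound $v(z_0+tw)\ge v(z_0) - o(1)$ is not enough; instead I use that $\psi-v$ has a \emph{minimum} at $z_0$ together with $\psi$ being $C^1$: for $w\perp z_0$, $\psi(z_0+tw)-\psi(z_0) = t\pscal{\nabla\psi(z_0)}{w} + o(t)$ must be $\ge v(z_0+tw)-v(z_0)$. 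Since $|z_0+tw|^2 = |z_0|^2 + t^2|w|^2$ gives $|z_0+tw| - |z_0| = \tfrac{t^2|w|^2}{2|z_0|} + o(t^2) = o(t)$, and $f$ is continuous at $\rho-|z_0|$, we only get $v(z_0+tw)-v(z_0) = o(1)$, which is too weak. The correct argument restricts to the \emph{radial} direction instead: set $z = \tfrac{s}{|z_0|} z_0$ for $s$ near $|z_0|$, so that $|z| = s$ and $v(z) = f(\rho - s)$. Then $\eta(s) := \psi\big(\tfrac{s}{|z_0|} z_0\big)$ is a $C^1$ function of $s$ with $\eta(s)\ge f(\rho-s) + \eta(|z_0|) - f(\rho-|z_0|)$ near $s=|z_0|$, i.e. $\eta$ touches $s\mapsto f(\rho-s)$ from above at $s=|z_0|$. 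Therefore $f(\rho-s) - f(\rho-|z_0|) \le \eta(s)-\eta(|z_0|) = \eta'(|z_0|)(s-|z_0|) + o(|s-|z_0||)$; setting $\tau = \rho - s$ this reads $f(\tau) - f(\rho-|z_0|) \le -\eta'(|z_0|)(\tau - (\rho-|z_0|)) + o(|\tau-(\rho-|z_0|)|)$, which says precisely that $-\eta'(|z_0|)\in D^+f(\rho-|z_0|)$. In particular $D^+f(\rho-|z_0|)\neq\emptyset$. By the chain rule $\eta'(|z_0|) = \pscal{\nabla\psi(z_0)}{z_0/|z_0|}$, so with $\alpha := \pscal{\nabla\psi(z_0)}{z_0/|z_0|}$ we have $\alpha\in -D^+f(\rho-|z_0|)$.

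It remains to show $\nabla\psi(z_0)$ is parallel to $z_0$, i.e. that its component orthogonal to $z_0$ vanishes. Here I \emph{do} use that $D^+f(\rho-|z_0|)$ is nonempty (just proved), which gives a one-sided Taylor bound $f(\rho-s) \le f(\rho-|z_0|) + \beta(|z_0|-s) + o(|s-|z_0||)$ for $\beta := -\alpha' $ any element of $-D^+f(\rho-|z_0|)$; equivalently $v(z) \le v(z_0) - \beta(|z|-|z_0|) + o(|z|-|z_0|)$ near $z_0$. Combining with $\psi(z)\ge v(z) + \psi(z_0)-v(z_0)$ and $|z|-|z_0| = \pscal{z_0/|z_0|}{z-z_0} + O(|z-z_0|^2)$, we get $\psi(z) - \psi(z_0) \ge -\beta\pscal{z_0/|z_0|}{z-z_0} + o(|z-z_0|)$. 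Since $\psi$ is $C^1$, $\psi(z)-\psi(z_0) = \pscal{\nabla\psi(z_0)}{z-z_0} + o(|z-z_0|)$, so $\pscal{\nabla\psi(z_0) + \beta\, z_0/|z_0|}{z-z_0}\ge o(|z-z_0|)$ for all $z$ near $z_0$; taking $z-z_0$ in both directions $\pm e$ for any unit $e$ forces $\nabla\psi(z_0) + \beta\, z_0/|z_0| = 0$, i.e. $\nabla\psi(z_0) = -\beta\, z_0/|z_0|$. Setting $\alpha = -\beta$ gives $\nabla\psi(z_0) = \alpha\, z_0/|z_0|$ with $\alpha\in -D^+f(\rho-|z_0|)$, as claimed; note this is consistent with the first step since $\pscal{\nabla\psi(z_0)}{z_0/|z_0|} = -\beta = \alpha$. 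The main (minor) obstacle is bookkeeping the asymptotics $|z|-|z_0| = \pscal{z_0/|z_0|}{z-z_0} + O(|z-z_0|^2)$ correctly and making sure the continuity of $f$ alone never needs to control $v$ in tangential directions — handled by reducing everything to the radial slice and then invoking the one-sided estimate coming from the (now nonempty) super-differential of $f$.
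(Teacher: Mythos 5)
Your first step (restriction to the radial segment $s\mapsto (s/|z_0|)z_0$, Taylor-expanding the $C^1$ function $\eta$ and reading off that $-\eta'(|z_0|)=-\pscal{\nabla\psi(z_0)}{z_0/|z_0|}\in D^+f(\rho-|z_0|)$) is correct and is essentially the paper's argument. The genuine gap is in your second step, where you claim the tangential component of $\nabla\psi(z_0)$ vanishes. There you combine $\psi(z)\ge v(z)+\psi(z_0)-v(z_0)$ with the super-differential estimate $v(z)\le v(z_0)-\beta(|z|-|z_0|)+o(|z|-|z_0|)$ to deduce $\psi(z)-\psi(z_0)\ge -\beta\pscal{z_0/|z_0|}{z-z_0}+o(|z-z_0|)$. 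This chains the inequalities in the wrong direction: $\psi\ge v$ gives a lower bound for $\psi$ in terms of $v$, while the super-differential gives only an \emph{upper} bound for $v$; from ``$\psi-\psi(z_0)\ge v-v(z_0)$'' and ``$v-v(z_0)\le(\text{linear term})$'' nothing follows about $\psi-\psi(z_0)$ versus that linear term. A sanity check confirms the argument cannot be repaired as stated: if it were valid it would yield $\nabla\psi(z_0)=-\beta\,z_0/|z_0|$ for \emph{every} $\beta\in D^+f(\rho-|z_0|)$ (you allow $\beta$ to be an arbitrary element), which is impossible whenever $D^+f(\rho-|z_0|)$ is a nondegenerate interval, as it may well be for a merely continuous $f$ with a concave kink. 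Note that you yourself identified the real difficulty earlier -- continuity of $f$ gives no rate of control of $v$ in tangential directions, since $|z_0+tw|-|z_0|=O(t^2)$ but $f$ may oscillate at an arbitrary modulus -- and the super-differential bound does not resolve it because it bounds $v$ from the wrong side.

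The paper's proof closes exactly this point with a different device: instead of straight tangential segments, move along an arc of circumference $\gamma(s)$ with $\gamma(0)=z_0$, $\gamma'(0)=n_0\perp z_0$ and $|\gamma(s)|=|z_0|$ identically. On this arc $v$ is \emph{exactly} constant, $v(\gamma(s))=f(\rho-|z_0|)=\psi(z_0)$, so $\psi(\gamma(s))\ge v(\gamma(s))=\psi(z_0)$, i.e.\ $s=0$ is a local minimum of the $C^1$ function $\psi\circ\gamma$; hence $\pscal{\nabla\psi(z_0)}{n_0}=0$ for every unit vector $n_0$ orthogonal to $z_0$, which gives the parallelism $\nabla\psi(z_0)=\alpha\,z_0/|z_0|$ with the specific $\alpha=\pscal{\nabla\psi(z_0)}{z_0/|z_0|}\in -D^+f(\rho-|z_0|)$ already produced by your first step. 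Replacing your second step by this sphere-constrained argument (or by the equivalent normalized curve $t\mapsto |z_0|\,(z_0+tn_0)/|z_0+tn_0|$) completes the proof; part (b) then follows, as you say, by replacing $f,v,\psi$ with their negatives.
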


\begin{proof}
We prove only statement (a), as the proof of (b) is completely analogous. 

Let $\zeta _0:= z_0 /|z_0|$ and define the map $z: [ 0 , \rho] \to B _\rho (0) $ by 
$z(s) := (\rho-s)\zeta_0$.

Setting $s_0 := \rho- |z_0|$, we have \[
\begin{split}
f(s) & = f(\rho - |z(s)|) = v(z(s)) 
\\ & \leq
\psi(z(s)) = \psi(z_0) + \pscal{\nabla\psi(z_0)}{z(s)-z_0} +
o(|z(s) - z_0|)
\\ & =
f(s_0) - (s-s_0) \pscal{\nabla\psi(z_0)}{\zeta_0} + o(|s-s_0|).
\end{split}
\]
This shows that, setting $\alpha :=\pscal{\nabla\psi(z_0)}{\zeta_0}$, it holds $-\alpha\in D^+ f(s_0)$.

Let now $n_0$ be a unit vector orthogonal to $\zeta_0$, and consider an
arc of circumference $\gamma(s)$, $s \in ( - \epsilon, \epsilon)$, such that $\gamma(0) = z_0$,
$\gamma'(0) = n_0$ and $|\gamma(s)| = |z_0|$.
We have 
\[
\begin{split}
\psi(z_0)  &= f(\rho- |z_0|) = f(\rho- |\gamma(s)|) =
v(\gamma(s)) 
\\ & \leq \psi(\gamma(s)) =
\psi(z_0) + \pscal{\nabla\psi(z_0)}{n_0}  \, s + o(s)\,.
\end{split}
\]
Then  it must be $\pscal{\nabla\psi(z_0)}{n_0}= 0$, completing the proof.
\end{proof}

\bigskip
\begin{proposition}
\label{p:caratt} Let $\Omega\subset\R^n$ be a non-empty bounded open set, 
let $f\colon [0,\rho _\Omega]\to\R$ be a continuous function, and assume that
$u(x) := f(d_{\partial \Omega}(x))$ is a web viscosity solution
of
\begin{equation}
\label{f:equau}
-\Delta_{\infty} u = 1 \qquad \text{in}\ \Omega.
\end{equation}
Then:
\begin{itemize}
\item[(i)] the map $t \mapsto f (t)$ is monotone increasing on $[0, \rho _\Omega]$;

\smallskip
\item[(ii)] the function $v(z) := f(\rho _\Omega -|z|)$ is a viscosity solution of
\begin{equation}
\label{f:visrad}
-\Delta_{\infty} v = 1 \qquad \text{in}\ 
B_{\rho_\Omega}(0)\setminus\{0\}.
\end{equation}

\end{itemize}
\end{proposition}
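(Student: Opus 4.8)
The plan is to prove statements (i) and (ii) in order, obtaining (ii) essentially as a by-product of the reasoning used for (i) together with Lemma \ref{l:subdif}.

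For part (i), the strategy is to argue by contradiction using the viscosity subsolution property. Suppose $f$ is not monotone increasing on $[0,\rho_\Omega]$. Since $u = f(d_{\partial\Omega})$ vanishes on $\partial\Omega$ and is a web solution, we expect that a failure of monotonicity produces a point $x_0\in\Omega$ where $u$ attains a local interior maximum along the "radial" direction $\nabla d_{\partial\Omega}$, i.e. a point where $f$ has a local maximum at $t_0 = d_{\partial\Omega}(x_0)\in(0,\rho_\Omega)$ from at least one side. One can then build a smooth test function touching $u$ from above at $x_0$ with vanishing gradient there: indeed, near a regular point $x_0$ the distance function is $C^\infty$ and $u(x) = f(d_{\partial\Omega}(x))$; picking a $C^2$ function $\theta$ with $\theta(t_0)=f(t_0)$, $\theta\geq f$ near $t_0$, $\theta'(t_0)=0$ (possible since $t_0$ is a local max of $f$), the composition $\varphi = \theta\circ d_{\partial\Omega}$ touches $u$ from above at $x_0$ and has $\nabla\varphi(x_0)=\theta'(t_0)\nabla d_{\partial\Omega}(x_0)=0$, whence $\Delta_\infty\varphi(x_0)=0$, contradicting the subsolution inequality $-\Delta_\infty\varphi(x_0)\leq 1$ — wait, that inequality is satisfied; the contradiction instead comes from the supersolution side. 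More carefully: at a local max $t_0$ of $f$ one gets, from the left, $\theta'(t_0)\geq 0$ is forced but one can also find $\theta$ touching from below with $\theta'(t_0)=0$, and then $-\Delta_\infty\varphi(x_0)=0\geq 1$ fails. The cleanest route is to show that if $f$ were not monotone there would be an interior point where a $C^2$ radial test function with zero derivative touches $u$ from below, contradicting \eqref{f:supersol}. I would also need to handle the possibility that the relevant point lies on $\Cut(\Omega)$; there the test-function construction must be combined with the fact (as in the proof of Theorem \ref{t:a}) that at cut points one can compare with radial profiles, and singular points only make it harder to touch from below, so no contradiction is lost.

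For part (ii), fix any $z_0\in B_{\rho_\Omega}(0)\setminus\{0\}$ and let $\psi\in C^2$ touch $v(z)=f(\rho_\Omega-|z|)$ from above at $z_0$; we must show $-\Delta_\infty\psi(z_0)\leq 1$ (the subsolution case, the supersolution case being symmetric). By Lemma \ref{l:subdif}(a), $\nabla\psi(z_0)=\alpha\, z_0/|z_0|$ with $-\alpha\in D^+f(\rho_\Omega-|z_0|)$, and by monotonicity from (i) we have $\alpha\leq 0$, i.e. $-\alpha\geq 0$. Now pick a point $x_0\in\Omega$ and a direction so that the behaviour of $v$ near $z_0$ along the radial ray matches the behaviour of $u$ near $x_0$ along $\nabla d_{\partial\Omega}$: concretely, choose $x_0$ regular with $d_{\partial\Omega}(x_0)=\rho_\Omega-|z_0|=:t_0$ and let $\nu=\nabla d_{\partial\Omega}(x_0)$; then transplant $\psi$ to a $C^2$ function $\varphi$ near $x_0$ that depends, to second order along $\nu$ and to first order transversally, exactly as $\psi$ does near $z_0$ (the radial second derivative of $\psi$ at $z_0$ governs $\Delta_\infty\psi(z_0)$ because $\nabla\psi(z_0)$ is radial, so only the $\nu$-$\nu$ second derivative of $\varphi$ enters $\Delta_\infty\varphi(x_0)$). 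Using the one-dimensional comparison exactly as in the "Case $x_0\notin\Cut(\Omega)$" part of the proof of Theorem \ref{t:a} — namely comparing $h(t)=\varphi(x_0+(t-t_0)\nu)$ with $g_f(t):=f(t)$ — one gets $h'(t_0)^2h''(t_0)\leq g_f'(t_0)^2 g_f''(t_0)$ in the sense that the infinity-Laplacian inequality propagates, and since $u$ is a viscosity subsolution of \eqref{f:equau} this yields $-\Delta_\infty\psi(z_0)=-h'(t_0)^2h''(t_0)\leq 1$.

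The main obstacle I anticipate is the regularity bookkeeping in part (ii): $f$ is only assumed continuous, so "second derivatives of $f$" do not exist, and the transplantation of the test function $\psi$ from $B_{\rho_\Omega}(0)$ to $\Omega$ must be done purely at the level of test functions and touching, never differentiating $f$ or $d_{\partial\Omega}$ beyond the first order where they are smooth. The correct device is: given $\psi$ touching $v$ from above at $z_0$, produce from it (via the radial structure and Lemma \ref{l:subdif}) a $C^2$ function on a neighbourhood of $x_0$ in $\Omega$ touching $u=f\circ d_{\partial\Omega}$ from above at $x_0$, then invoke that $u$ is a subsolution. Choosing $x_0$ to be a regular point of $d_{\partial\Omega}$ (which exist with any prescribed value $t_0\in(0,\rho_\Omega)$, since the regular set is dense and the distance function takes all values in $[0,\rho_\Omega]$) keeps $d_{\partial\Omega}$ locally $C^\infty$ and makes this transplantation legitimate. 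Verifying that the transplanted function genuinely touches from above, and that its infinity-Laplacian at $x_0$ equals the radial quantity built from $\psi$ at $z_0$, is the one place where care is essential; everything else follows the template already established in Section \ref{seca}.
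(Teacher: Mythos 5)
Your plan for part (ii) has the right flavor (transplant the test function from the radial picture into $\Omega$ along a ray and use Lemma \ref{l:subdif} to see that only the radial second derivative enters), but as written both parts have genuine gaps. In part (i), the failure of monotonicity does not produce an interior local \emph{maximum} of $f$ (take $f(t)=(t-\rho_\Omega/2)^2$: no interior max), and even where $f$ does have a local max no contradiction is available there: for a merely continuous $f$ there need be no $C^1$ function touching $u$ from below at such a point, so your claim that one can find $\theta$ with $\theta'(t_0)=0$ touching from below is unjustified, and your remark that singular points ``only make it harder to touch from below, so no contradiction is lost'' is backwards -- the contradiction \emph{requires} exhibiting a from-below test function. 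The correct point is an interior local \emph{minimum} of $f$ at some $t_0\in(0,\rho_\Omega]$ (obtained by minimizing $f$ over $[t_1,\rho_\Omega]$ when $f(t_1)>f(t_2)$, $t_1<t_2$): any $x_0$ with $d_{\partial\Omega}(x_0)=t_0$ is then a local minimum of $u$, the \emph{constant} function $\varphi\equiv u(x_0)$ is an admissible test function touching from below regardless of whether $x_0$ is regular or singular, and $-\Delta_\infty\varphi=0<1$ contradicts \eqref{f:supersol}. Note also that your composition $\theta\circ d_{\partial\Omega}$ is not an admissible test function in this setting: since no regularity of $\partial\Omega$ is assumed, $d_{\partial\Omega}$ is only $C^1$ (not $C^2$, let alone $C^\infty$) near regular points, which is exactly why the constant-function argument is used.

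In part (ii) the same regularity issue undermines your transplantation: you never define $\varphi$ concretely, and any definition routed through smoothness of $d_{\partial\Omega}$ near a regular point, or through ``$g_f'(t_0)^2g_f''(t_0)$'' for a merely continuous $f$, is not available. The device that works is to compose $\psi$ with an explicitly smooth surrogate of the distance: pick a maximal ray $[p_0,q_0]$ with $p_0\in\high(\Omega)$, $q_0\in\partial\Omega$, choose $x_0\in\,]p_0,q_0[$ with $d_{\partial\Omega}(x_0)=\rho_\Omega-|z_0|$, and set $\varphi(x)=\psi\bigl(\bigl[\rho_\Omega-|x-q_0|\bigr]\zeta_0\bigr)$ for the subsolution half. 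The verification that $\varphi$ touches $u$ from above -- the step you flag as delicate but do not carry out -- uses precisely the monotonicity from (i) together with $d_{\partial\Omega}(x)\le|x-q_0|$; your proposal never invokes (i) at this point (you only use it to get $\alpha\le 0$, which is not needed). Moreover the supersolution half is \emph{not} ``symmetric'' under your choice of an arbitrary regular $x_0$: touching $u$ from below needs a lower bound on $f(d_{\partial\Omega}(x))$, which comes from $d_{\partial\Omega}(x)\ge \rho_\Omega-|x-p_0|$ and hence forces $x_0$ to lie on a maximal ray through a point of the high ridge (one then uses $\tilde z(x)=|x-p_0|\zeta_0$ and Lemma \ref{l:subdif}(b)). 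Finally, the identity $\Delta_\infty\varphi(x_0)=\Delta_\infty\psi(z_0)$ requires the computation with $\delta(x)=|x-q_0|$, $D^2\delta\,\nabla\delta=0$ and the radiality of $\nabla\psi(z_0)$; you state the right heuristic but these missing choices and verifications are the substance of the proof.
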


\begin{proof}
(i) Assume by contradiction that $t \mapsto f (t)$ is not monotone increasing  on $[0, \rho _\Omega]$: let $t_1, t_2 \in [0, \rho _\Omega]$ be such that  $t_1 < t _2$ but $f ( t _1) > f ( t_2)$. Then the absolute minimum of the continuous function $f$ on the interval $[t_1, \rho _\Omega]$ is attained at some point $t_0 > t _1$; in particular, there exists a point $t_0 \in (0, \rho _\Omega]$ which is of local minimum for the map $f$. Let us show that this  fact is not compatible with the assumption that  $u (x)= f ( d _{\partial \Omega} (x))$ is a web viscosity solution to (\ref{f:equau}).  Since $t_0>0$, there exists a point  $x_0$ lying  in $\Omega$ such that $d _{\partial \Omega} (x_0) = t _0$. Since $t_0$ is a local minimum for the map $f$, the point $x_0$ is a local minimum for the function $u$. Then, we can construct a $C ^2$ function $\varphi$ which touches $u$ from below at $x_0$, and is locally constant in a neighborhood of $x_0$, namely $\varphi (x) = u (x_0)$ for every $x \in B _ r (x_0)$ (for some $r>0$). Clearly it holds $-\Delta _\infty \varphi  = 0 < 1$, against the fact that $u$ is a viscosity super-solution. 

\medskip
(ii) Let $z_0\in B_{\rho_\Omega}(0)\setminus\{0\}$ be fixed. Let us prove that $v$ is a viscosity sub-solution to (\ref{f:visrad}) at $z_0$.  If $\psi$ is
a $C^2$ function touching $v$ from above at $z_0$, 
we have to show that
\begin{equation}
\label{f:sspsi}
- \Delta_{\infty}\psi(z_0) =
-\pscal{D^2\psi(z_0) \nabla\psi(z_0)}{\nabla\psi(z_0)} \leq 1.
\end{equation}
We choose  a maximal ray $[p_0, q_0]$, with $p_0 \in M (\Omega)$ and $q_0 \in \partial \Omega$, that is, $p_0$ is the center of a ball of radius $\rho _\Omega = |p _0 - q _0|$ contained into $\Omega$. 
We pick a point $x_0  \in \Omega$ such that  
$$x_0 \in ] p _0, q _0[ \qquad \hbox{ and } \qquad d _{\partial \Omega} (x_0) = \rho _\Omega - | z_0|$$ and, for $x$ belonging to a neighborhood of $x_0$,  we set
$$
z (x) := \big [ \rho _\Omega - | x - q _0 | \big ] \zeta _0\ ,\qquad \hbox{ with } \zeta_0 := \frac{z_0} { | z_0 |}\,.
$$
In particular, notice that by construction there holds $z (x_0) = z _0$. 

We now consider the composite map
$$\varphi (x) := \psi ( z ( x))\, ,$$ 
which is clearly of class $C ^2$ in a neighborhood of $x_0$. We claim that $\varphi$ touches $u$ from above at $x_0$. Indeed, by  the definitions of $u, v$, and $z$, and since $\psi$ touches $v$ from above at $z_0= z (x_0)$, there holds

\[
u(x_0) = f ( d _{\partial \Omega} (x_0) ) = f (\rho _\Omega - |z_0|) = v(z_0) = \psi(z_0) = \varphi(x_0). 
\]
Moreover there exists $r>0$ such that
\[ 
u(x) = f(d_{\partial \Omega}(x))  \leq f (\rho _\Omega - | z (x)| )  
= v(z(x)) \leq \psi(z(x)) = \varphi(x)
\qquad\forall x\in B_{r}(x_0). 
\]
Notice that the first inequality in the line above follows from statement (i) already proved
(taking into account that $| z(x)| \leq \rho _\Omega - d _{\partial \Omega} (x)$), while the second one holds for $r$ sufficiently small by the assumption that $\psi $ touches $v$ from above at $z_0$ and the continuity the map $z$ at $x_0$. 

Then, since $\varphi$ touches $u$ from above at $x_0$ and by assumption $u$ is a viscosity solution to \eqref{f:equau},
we deduce that \begin{equation}
\label{f:ssphi}
-\Delta_{\infty}\varphi(x_0) =
-\pscal{D^2\varphi(x_0) \nabla\varphi(x_0)}{\nabla\varphi(x_0)}  \leq 1.
\end{equation}
Setting $\delta(x) := | x - q_0|$, a direct computation yields
\[
\begin{split}
& \nabla\varphi(x) = - \pscal{\nabla\psi(z(x))}{\zeta_0}\, \nabla \delta(x),\\
& D^2\varphi(x) = \pscal{D^2\psi(z(x))\, \zeta_0}{\zeta_0}\,
\nabla \delta(x) \otimes \nabla \delta(x)
- \pscal{\nabla\psi(z(x))}{\zeta_0}\, D^2 \delta(x)\,.
\end{split}
\]
Taking into account the identities
\[
\begin{split}
& [\nabla \delta(x) \otimes \nabla \delta(x)] \nabla \delta(x) = \nabla \delta(x),\\
& D^2 \delta(x)\, \nabla \delta(x) = 0,
\end{split}
\]
we obtain
\begin{equation}\label{f:1}
\pscal{D^2\varphi(x_0) \nabla\varphi(x_0)}{\nabla\varphi(x_0)}  =
\pscal{D^2\psi(z_0)\, \zeta_0}{\zeta_0} \,
\left(\pscal{\nabla\psi(z_0)}{\zeta_0}\right)^2\,.
\end{equation}
Now, from Lemma~\ref{l:subdif} (a) we have
$$
\nabla\psi(z_0) = \alpha\zeta_0, \qquad \hbox{ with  } 
 \alpha\in -D^+f(
\rho _\Omega -|z_0|)\,.
$$
Therefore, \begin{equation}\label{f:2}
\pscal{D^2\psi(z_0)\, \zeta_0}{\zeta_0} \,
\left(\pscal{\nabla\psi(z_0)}{\zeta_0}\right)^2 = \pscal{D^2\psi(z_0)\, \nabla\psi(z_0)}{\nabla\psi(z_0)}\,.
\end{equation}
In view of (\ref{f:1}) and (\ref{f:2}), we conclude that \eqref{f:sspsi} follows from \eqref{f:ssphi}.

In order to prove that $v$ is a viscosity super-solution to (\ref{f:visrad}) at $z_0$, one can argue in a completely analogous way. More precisely, keeping the same definitions of $\zeta _0$, $p_0$, $q_0$, and $x_0$ as above, one has just to modify the auxiliary function $z(x)$ into
$\tilde z (x) := |x- p _0 | \zeta _0$, then replace the distance function $\delta (x)$  by $\tilde \delta (x) := |x - p _0|$, and finally
apply part (b) in place of part (a) of Lemma~\ref{l:subdif}. 
\end{proof}

\bigskip

{\bf Proof of Theorem \ref{t:b}}. 
Throughout the proof, we set for brevity $d:= d _{\partial \Omega}$.

Let us first prove the equality $u = \phi _\Omega$. 
Since by assumption $u$ is a web function and belongs to $C ^ 0 (\overline \Omega)$ 
(because a viscosity solution to \eqref{f:dirich} is by definition continuous up to the boundary), 
there exists a continuous function 
$f\colon [0,\rho_\Omega]\to\R$ such that 
$$u(x) = f(\dist(x))\ .$$
We have to show that  $f$ agrees with the function $g$ defined by (\ref{defg}). 

Since $u$ is assumed to be a viscosity solution to the Dirichlet problem (\ref{f:dirich}), by Proposition \ref{p:caratt} the function $v(z) := f(\rho _\Omega -|z|)$ is a viscosity solution to 
\begin{equation}
\label{f:dirisrad}
\begin{cases}
-\Delta_{\infty} v = 1 & \text{in}\ B_{\rho _\Omega}(0)\setminus\{0\},\\
v = 0 & \text{on}\ \partial B_{\rho _\Omega}(0)\\
v(0) = f (\rho _\Omega)\,.
\end{cases}
\end{equation}

Let us define, for every
$r >0$, the function 
\begin{equation}\label{defgc}
g_r(t) :=  c_0  \left [r ^ {4/3} - ( r - t ) ^ {4/3} \right ],
\qquad t\in [0, r]. 
\end{equation}

We claim that there exists $r \in [\rho _\Omega, + \infty)$ such that 
\begin{equation}\label{f:rgiusto}
g_{r} (\rho _\Omega) = f ( \rho _\Omega) \,.
\end{equation}

To prove this claim, we observe that the function 
$$r \mapsto  g _ r(\rho _\Omega)= c_0  \left [ r ^ {4/3} - ( r - \rho _\Omega ) ^ {4/3} \right ] $$ 
maps the interval $[\rho _\Omega, + \infty)$ onto $[c_0 \rho _\Omega ^ {4/3}, + \infty)$. 
Thus in order to show the existence of some $ r$ such that (\ref{f:rgiusto}) holds, it is enough to prove the inequality
\begin{equation}\label{f:forigin}
f (\rho _\Omega) \geq c_0 \rho _\Omega ^ {4/3}\,.
\end{equation}
In turn, this inequality readily follows by a comparison principle holding for the Dirichlet problem (\ref{f:dirich}). 
Namely, let $x_0 \in M (\Omega)$.  By Lemma \ref{l:primo}, the function $w(x):= g (\rho _\Omega - |x-x_0|)$  solves $- \Delta _\infty w = 1$ in $B _{\rho _\Omega}(x_0)$ and $w = 0$ on $\partial B _{\rho _\Omega}(x_0)$. On the other hand, the function $u$ solves $- \Delta _\infty u = 1$ 
in $B _{\rho _\Omega}(x_0)$ and $u \geq 0$ on $\partial B _{\rho _\Omega}(x_0)$. The latter inequality can be deduced by applying
the following result proved in \cite[Thm.\ 3]{LuWang}:
if $w_1 ,w_2  \in C (\overline A)$ are 
respectively a viscosity sub- and super-solution to
$-\Delta_\infty w = 1$ in $A$,
and $w_1 \leq w_2 $ on $\partial A$, then $w_1 \leq w_2$ in $A$. 

Again by applying the same result, we  deduce that $u (x) \geq g (\rho _\Omega - |x-x_0|)$ in $B _{\rho _\Omega}(x_0)$. This implies in particular 
$$f (\rho _\Omega)  = u (x_0) \geq g (\rho _\Omega) = c _0 \rho _\Omega ^ {4/3}\, $$
and concludes the proof of the claim. 

Now, we have that the function
\[
 g_{r}(\rho _\Omega -|z|), \qquad z\in B_{\rho _\Omega} (0),
\]
is a classical solution (and hence a viscosity solution)
to  problem \eqref{f:dirisrad}. (Notice that in particular the third equation in (\ref{f:dirisrad}) is satisfied thanks to (\ref{f:rgiusto})).

From \cite[Theorems 1 and 5]{LuWang}, we know that
there exists a unique viscosity solution to (\ref{f:dirisrad}).  We conclude that, for some $r \geq \rho _\Omega$, it holds
$v(z) = g_{r} (\rho _\Omega -|z|)$, that is 
\begin{equation}\label{fgc}
f (\rho _\Omega -|z|) = g_{r} (\rho _\Omega -|z|) \, ,
\end{equation}
or equivalently $u (x) = g_r ( d(x))$.

To conclude, we have to show the following equalities: 
$$u = \phi _\Omega \qquad \hbox{ and } \qquad \Cut (\Omega ) = \high (\Omega)\,.$$

\smallskip
\textsl{Proof of the equality $u = \phi _\Omega$}. 

\smallskip
Since we know that $u ( x) = g _ {r} ( d(x))$ for some $r \geq \rho _\Omega$, all we have to prove is that 
$r = \rho _\Omega$.   
We recall that, since $r\geq \rho_\Omega$, then
$g'_r(\rho_\Omega)\geq 0$, and that
$g'_r(\rho_\Omega) = 0$ if and only if $r = \rho _\Omega$.   
Assume by contradiction that $g' _r (\rho _\Omega)>0$.  
Let $x_0 \in \high (\Omega)$. Without loss of generality, assume that $x_0 = 0$. Thanks to the concavity of $g_r$, we have
\begin{equation}\label{f:conc1}
u (x) = g _r (d(x)) \leq  u (0)+  g' _ r (\rho _\Omega)  
(d(x) - \rho _\Omega)\,. 
\end{equation}
Let $p, \zeta \in \R ^n$ be associated with the point $x_0=0$ according to Theorem \ref{p:estid}, 
with $\pscal{\zeta}{p}\neq 0$,
and set
\[
\psi(x) := \pscal{p}{x}
- c \pscal{\zeta}{x}^2 +\frac{1}{2 \rho _\Omega}
|x|^2.
\]
Then, by (\ref{f:conc1}) and Theorem \ref{p:estid}, 
it holds 
\[
u(x) \leq \varphi(x) := u(0) + g_r'(\rho _\Omega)\psi(x)\,
\]
so that the function $\varphi$ touches $u$ from above. 
Some straightforward computations give 
\[
\Delta_{\infty}\varphi(0) = 
g_r'(\rho _\Omega)^3\ \Delta_{\infty}\psi(0)
= g_r'(\rho _\Omega)^3\ \left(
-2 c \pscal{\zeta}{p}^2 + \frac{1}{\rho _\Omega}|p|^2  \right)
\]
Since $g_r' (\rho _\Omega)>0$ and $\pscal{\zeta}{p} \neq 0$, it is enough to choose  $c>0$ large enough in order to have $\Delta_{\infty}\varphi(0) < -1$, contradiction.

\smallskip
\textsl{Proof of the equality $\Cut (\Omega) = \high (\Omega)$}. 

\smallskip
Since we have just proved that $u = \phi _\Omega$, we know that $u ( x) = g  ( d(x))$, with $g$ as in (\ref{defg}). 
Assume by contradiction that there exists $x_0 \in \Sigma (\Omega) \setminus \high (\Omega)$. Without loss of generality, assume that $x_0 = 0$, and set $d_0 = d (0)$. Since we are assuming $x_0 \not \in \high (\Omega)$, it holds $d_0 < \rho _\Omega$, which implies $g' (d_0) >0$. Then, we can reach a contradiction by arguing similarly as above. Namely, thanks to the concavity of $g$, we have
\begin{equation}\label{f:conc2}
u (x)  \leq u (0)+  g'  (d_0)  
(d(x) - d_0)\,. 
\end{equation}
Let $p, \zeta \in \R ^n$ be associated with the point $x_0=0$ according to Theorem \ref{p:estid}, with $\pscal{\zeta}{p}\neq 0$, and set
\[
\psi(x) := \pscal{p}{x}
- c \pscal{\zeta}{x}^2 +\frac{1}{2 d_0}
|x|^2.
\]
By (\ref{f:conc2}) and Theorem \ref{p:estid}, 
we have that
\[
u(x) \leq \varphi(x) := u(0) + g'(d_0)\psi(x)\, ,
\]
so that the function $\varphi$ touches $u$ from above.
Moreover,
\[
\Delta_{\infty}\varphi(0) = 
g'(d_0)^3\ \Delta_{\infty}\psi(0)
= g'(d_0)^3\ \left(
-2 c \pscal{\zeta}{p}^2 + \frac{1}{d_0}|p|^2  \right)\,.
\]
Since $g' (d_0)>0$ and $\pscal{\zeta}{p} \neq 0$, it is enough to choose  $c>0$ large enough in order to have $\Delta_{\infty}\varphi(0) < -1$, contradiction.  
We have thus shown that $\Sigma (\Omega ) \subseteq \high (\Omega)$. 
Since the converse inclusion holds true for all  $\Omega$, and since $M (\Omega)$ is a closed set, we conclude that the required equality $\Cut (\Omega) = \high (\Omega)$ holds.
\qed


\section{Appendix}\label{secapp}

In this Appendix we show how the proof of Theorem \ref{t:b} can be simplified under the additional assumption that the solution $u$ is differentiable. 

One can follow the proof given in Section \ref{secb} up to arriving at the equality (\ref{fgc}).   Then the conclusions
$u = \phi _\Omega$ and $\Cut (\Omega) = \high (\Omega)$ can be  achieved as follows, with no need to apply Theorem \ref{p:estid} or construct test functions.  As usual, we set for brevity $d: = d _{\partial \Omega}$.

\smallskip
\textsl{Proof of the equality $u = \phi _\Omega$}. 

\smallskip
We claim that  the function $f$ is differentiable in $(0,{\rho_\Omega} )$, and
\begin{equation}
\label{f:dsg}
f'_-(\rho _\Omega) := \lim_{t\to \rho _\Omega^-} \frac{f(t)- f(r)}{t-r} = 0.
\end{equation}

Namely, let us first show that $f$ is differentiable at an arbitrary fixed point $t_0\in (0,\rho _\Omega)$. 
Let $p_0 \in M (\Omega)$, that is, $p_0$ is the center of a ball of radius $\rho _\Omega = |p _0 - q _0|$ contained into $\Omega$, with 
$q_0 \in \pi _{\partial \Omega} ( p _0)$. We take $x_0  \in ]p_0, q_0[$ such that  
$d (x_0) = t _0$. Setting $\nu _0 := \frac{p_0 - q _0 }{\rho _\Omega}$, as $h \to 0$ there holds 
\[
u(x_0 + h \nu_0) - u(x_0) - h \pscal{\nabla u(x_0)}{\nu_0} = o(h),
\]
which implies
\[
f(t_0+h) - f(t_0) - h \pscal{\nabla u(x_0)}{\nu_0} = o(h)\,. 
\]
It remains to prove \eqref{f:dsg}.
Let $p_0$ be the center of a maximal ball as above. By Remark \ref{l:hull} and Carath\'eodory Theorem 
there exist points 
$q_0, q_1, \ldots, q_k\in ( \partial \Omega \cap \partial B_{\rho _\Omega}(p_0)   )$,
with $k$ equal at most $n$,
such that $p_0  \in {\rm conv} ( \{q_0, q_1, \ldots q_k\})$, 
i.e. 
\[
p_0 = \sum_{i=0}^k \lambda_i q_i,\qquad
\sum_{i=0}^k \lambda_i = 1, \quad
\lambda_i > 0\ \  \forall i=0,\ldots,k.
\]
Let us define the unit vectors
\[
\nu_i := \frac{p_0-q_i}{|p_0-q_i|} = \frac{p_0-q_i}{\rho _\Omega}\,, \qquad
i=0,\ldots,k.
\]
Since $u$ is differentiable at $p_0$, we have that,
for every $i=0,\ldots,k$ and $h\in (0,\rho _\Omega)$
\[
u(p_0-h\nu_i) - u(p_0) + h\, \pscal{\nabla u(p_0)}{\nu_i} = o(h),
\]
that is
\[
f(\rho _\Omega -h) - f( \rho _\Omega) + h\, \pscal{\nabla u(p_0)}{\nu_i} = o(h)\,.
\]
In turn, this equality yields
\[
f'_-(\rho_\Omega) = \pscal{\nabla u(p_0)}{\nu_i}\,,
\qquad i = 0, \ldots, k.
\]
Since $\sum_{i=0}^k \lambda_i = 1$ and $\sum_{i=0}^k \lambda_i \nu_i = 0$, we have
\[
f'_-(\rho_\Omega) = \sum_{i=0}^k \lambda_i f'_-(\rho _\Omega)  =
\sum_{i=0}^k \lambda_i\pscal{\nabla u(p_0)}{\nu_i} = 0, 
\]
which proves the claim. 

\smallskip
In view of the equality (\ref{fgc}) already proved, condition (\ref{f:dsg}) implies that $r$ is uniquely determined as $r = \rho _\Omega$, and the proof of the equality $u = \phi _\Omega$ is achieved.

\smallskip
\textsl{Proof of the equality $\Cut (\Omega) = \high (\Omega)$}. 

\smallskip
Let $x_0 \not \in \high (\Omega)$ be fixed. Then $d ( x_0 ) < \rho _\Omega$, which taking into account
the explicit expression (\ref{defg}) of the function $g$ implies
$g ' (d(x_0) )>0$. 
Using  the equality $u (x) = g (d(x))$ already proved, 
the differentiability of $u$ at $x_0$, and the inequality 
$|d(x) - d(x_0)| \leq |x-x_0|$, we see that
\[
\begin{split}
o(|x-x_0|) & =
u(x) - u(x_0)  - \pscal{\nabla u(x_0)}{x-x_0}
= g(d(x)) - g(d(x_0)) - \pscal{\nabla u(x_0)}{x-x_0}
\\ & = 
g'(d(x_0)) (d(x) - d(x_0)) + o(d(x) - d(x_0)) 
- \pscal{\nabla u(x_0)}{x-x_0}
\\ & =
g'(d(x_0)) (d(x) - d(x_0)) 
- \pscal{\nabla u(x_0)}{x-x_0} + o(|x-x_0|).
\end{split}
\]
This, combined with the inequality $g' ( d(x_0) ) >0$ noticed above,  implies that $d$ is differentiable at $x_0$.  
 We conclude that $\Sigma (\Omega) \subseteq \high (\Omega)$ and in turn, since $\high (\Omega)$ is a closed subset of $\Cut (\Omega)$, that $\Cut (\Omega) = \high (\Omega)$.
 \qed


\begin{thebibliography}{10}

\bibitem{Aro}
{G.} Aronsson, \emph{Extension of functions satisfying {L}ipschitz conditions},
  Ark. Mat. \textbf{6} (1967), 551--561 (1967).

\bibitem{ArCrJu}
{G.} Aronsson, {M.G.} Crandall, and {P.} Juutinen, \emph{A tour of the theory
  of absolutely minimizing functions}, Bull. Amer. Math. Soc. (N.S.)
  \textbf{41} (2004), no.~4, 439--505.

\bibitem{Barron}
{E.N.} Barron, \emph{Viscosity solutions and analysis in {$L^\infty$}},
  Nonlinear analysis, differential equations and control ({M}ontreal, {QC},
  1998), NATO Sci. Ser. C Math. Phys. Sci., vol. 528, Kluwer Acad. Publ.,
  Dordrecht, 1999, pp.~1--60.

\bibitem{BN}
H.~Berestycki and L.~Nirenberg, \emph{On the method of moving planes and the
  sliding method}, Bol. Soc. Brasil. Mat. (N.S.) \textbf{22} (1991), 1--37.

\bibitem{BDM}
T.~Bhattacharya, E.~DiBenedetto, and J.~Manfredi, \emph{Limits as
  {$p\to\infty$} of {$\Delta\sb pu\sb p=f$} and related extremal problems},
  Rend. Sem. Mat. Univ. Politec. Torino (1989), no.~Special Issue, 15--68
  (1991), Some topics in nonlinear PDEs (Turin, 1989).

\bibitem{BhMo}
{T.} Bhattacharya and {A.} Mohammed, \emph{Inhomogeneous {D}irichlet problems
  involving the infinity-{L}aplacian}, Adv. Differential Equations \textbf{17}
  (2012), no.~3-4, 225--266.

\bibitem{butkaw}
{G.} Buttazzo and {B.} Kawohl, \emph{Overdetermined boundary value problems for
  the {$\infty$}-{L}aplacian}, Int. Math. Res. Not. IMRN (2011), 237--247.

\bibitem{CaSi}
{P.} Cannarsa and {C.} Sinestrari, Semiconcave functions, {H}amilton-{J}acobi
  equations and optimal control, Progress in Nonlinear Differential Equations
  and their Applications, vol.~58, Birkh\"auser, Boston, 2004.

\bibitem{Cran}
{M.G.} Crandall, \emph{A visit with the {$\infty$}-{L}aplace equation},
  Calculus of variations and nonlinear partial differential equations, Lecture
  Notes in Math., vol. 1927, Springer, Berlin, 2008, pp.~75--122.

\bibitem{CEG}
{M.G.} Crandall, {L.C.} Evans, and {R.F.} Gariepy, \emph{Optimal Lipschitz
  extensions and the infinity {L}aplacian}, Calc. Var. Partial Differential
  Equations \textbf{13} (2001), no.~2, 123--139.

\bibitem{CHL}
{M.G.} Crandall, {H.} Ishii, and {P.L.} Lions, \emph{User's guide to viscosity
  solutions of second order partial differential equations}, Bull. Amer. Math.
  Soc. (N.S.) \textbf{27} (1992), 1--67.

\bibitem{Cf}
{G.} Crasta, \emph{Variational problems for a class of functionals on convex
  domains}, J.\ Differential Equations \textbf{178} (2002), 608--629.

\bibitem{Cg}
{G.} Crasta, \emph{Estimates for the energy of the solutions to elliptic
  {D}irichlet problems on convex domains}, Proc.\ Roy.\ Soc.\ Edinburgh Sect.~A
  \textbf{134} (2004), 89--107.

\bibitem{CFb}
{G.} Crasta and {I.} Fragal\`a, \emph{On the characterization of some classes
  of proximally smooth sets}, preprint arXiv:1305.2810, 2013.

\bibitem{CFGa}
{G.} Crasta, {I.} Fragal\`a, and {F.} Gazzola, \emph{A sharp upper bound for
  the torsional rigidity of rods by means of web functions}, Arch.\ Rational
  Mech.\ Anal. \textbf{164} (2002), 189--211.

\bibitem{CFGc}
{G.} Crasta, {I.} Fragal\`a, and {F.} Gazzola, \emph{On a long-standing
  conjecture by {P}\'olya-{S}zeg\"o and related topics}, Z.\ Angew.\ Math.\
  Phys. \textbf{56} (2005), 763--782.

\bibitem{CFGb}
{G.} Crasta, {I.} Fragal\`a, and {F.} Gazzola, \emph{On the role of energy
  convexity in the web function approximation}, NoDEA Nonlinear Differential
  Equations Appl. \textbf{12} (2005), 93--109.

\bibitem{CGa}
{G.} Crasta and {F.} Gazzola, \emph{Some estimates of the minimizing properties
  of web functions}, Calc.\ Var.\ Partial Differential Equations \textbf{15}
  (2002), 45--66.

\bibitem{DP}
{L.} Damascelli and {F.} Pacella, \emph{Monotonicity and symmetry results for
  {$p$}-{L}aplace equations and applications}, Adv. Differential Equations
  (2000), no.~7-9, 1179--1200.

\bibitem{EvSav}
{L.C.} Evans and {O.} Savin, \emph{{$C^{1,\alpha}$} regularity for infinity
  harmonic functions in two dimensions}, Calc. Var. Partial Differential
  Equations \textbf{32} (2008), 325--347.

\bibitem{EvSm}
{L.C.} Evans and {C.K.} Smart, \emph{Everywhere differentiability of infinity
  harmonic functions}, Calc. Var. Partial Differential Equations \textbf{42}
  (2011), 289--299.

\bibitem{EvYu}
{L.C.} Evans and {Y.} Yu, \emph{Various properties of solutions of the
  infinity-{L}aplacian equation}, Comm. Partial Differential Equations
  \textbf{30} (2005), no.~7-9, 1401--1428.

\bibitem{fg}
{I.} Fragal{\`a} and {F.} Gazzola, \emph{Partially overdetermined elliptic
  boundary value problems}, J. Differential Equations \textbf{245} (2008),
  1299--1322.

\bibitem{fgk}
{I.} Fragal{\`a}, {F.} Gazzola, and {B.} Kawohl, \emph{Overdetermined problems
  with possibly degenerate ellipticity, a geometric approach}, Math. Z.
  \textbf{254} (2006), 117--132.

\bibitem{GL}
{N.} Garofalo and {J.L.} Lewis, \emph{A symmetry result related to some
  overdetermined boundary value problems}, Amer. J. Math. \textbf{111} (1989),
  9--33.

\bibitem{gazzola}
{F.} Gazzola, \emph{Existence of minima for nonconvex functionals in spaces of
  functions depending on the distance from the boundary}, Arch. Ration. Mech.
  Anal. \textbf{150} (1999), 57--75.

\bibitem{Jen}
{R.} Jensen, \emph{Uniqueness of {L}ipschitz extensions: minimizing the sup
  norm of the gradient}, Arch.\ Rational Mech.\ Anal. \textbf{123} (1993),
  51--74.

\bibitem{JenWanYu}
{R.} Jensen, {C.} Wang, and {Y.} Yu, \emph{Uniqueness and nonuniqueness of
  viscosity solutions to Aronsson's equation}, Arch. Ration. Mech. Anal.
  \textbf{190} (2008), 347--370.

\bibitem{K1}
{B.} Kawohl, \emph{On a family of torsional creep problems}, J. Reine Angew.
  Math. \textbf{410} (1990), 1--22.

\bibitem{K2}
B.~Kawohl, \emph{Overdetermined problems and the {$p$}-{L}aplacian}, Acta Math.
  Univ. Comenian. (N.S.) \textbf{76} (2007), 77--83.

\bibitem{LuWang}
{G.} Lu and {P.} Wang, \emph{Inhomogeneous infinity {L}aplace equation}, Adv.
  Math. \textbf{217} (2008), 1838--1868.

\bibitem{posz}
G.~P{\'o}lya and G.~Szeg{\"o}, Isoperimetric {I}nequalities in {M}athematical
  {P}hysics, Annals of Mathematics Studies, no. 27, Princeton University Press,
  Princeton, N. J., 1951.

\bibitem{Praj}
{J.} Prajapat, \emph{Serrin's result for domains with a corner or cusp}, Duke
  Math. J. \textbf{91} (1998), 29--31.

\bibitem{Se}
{J.} Serrin, \emph{A symmetry problem in potential theory}, Arch.\ Rational
  Mech.\ Anal. \textbf{43} (1971), 304--318.

\bibitem{Vog}
{A.L.} Vogel, \emph{Symmetry and regularity for general regions having a
  solution to certain overdetermined boundary value problems}, Atti Sem. Mat.
  Fis. Univ. Modena \textbf{40} (1992), 443--484.

\bibitem{Yu}
{Y.} Yu, \emph{Some properties of the ground states of the infinity
  {L}aplacian}, Indiana Univ. Math. J. \textbf{56} (2007), 947--964.

\end{thebibliography}

\def\cprime{$'$}
\providecommand{\bysame}{\leavevmode\hbox to3em{\hrulefill}\thinspace}
\providecommand{\MR}{\relax\ifhmode\unskip\space\fi MR }
\providecommand{\MRhref}[2]{%
  \href{http://www.ams.org/mathscinet-getitem?mr=#1}{#2}
}
\providecommand{\href}[2]{#2}

\end{document}